\newtheorem{theorem}{Theorem}[section]
\newtheorem{corollary}[theorem]{Corollary}
\newtheorem{definition}[theorem]{Definition}
\newtheorem{example}[theorem]{Example}
\newtheorem{lemma}[theorem]{Lemma}
\newtheorem{proposition}[theorem]{Proposition}
\newtheorem{remark}[theorem]{Remark}
\numberwithin{equation}{section}
\newcommand{\OS}{\mathcal{OS}(z,t)}
\newcommand{\OSq}{\mathcal{OS}(q-q^{-1},t)}
\newcommand{\OSqq}{\mathcal{OS}(q-q^{-1},t)}
\newcommand{\N}{\mathbb{N}}
\newcommand{\Z}{\mathbb{Z}}
\newcommand{\R}{\mathbb{R}}
\newcommand{\Q}{\mathbb{Q}}
\newcommand{\C}{\mathbb{C}}
\newcommand{\Hy}{\mathscr{H}}
\newcommand{\gl}{\mathfrak g \mathfrak l}
\newcommand{\la}{\langle}
\newcommand{\ra}{\rangle}
\newcommand{\U}{\mathbf U}
\newcommand{\I}{\mathbb I}
\newcommand{\K}{\mathbb K}
\newcommand{\ua}{\uparrow}
\newcommand{\da}{\downarrow}
\newcommand{\uda}{\la \ua,\da\ra}
\newcommand{\opp}[1]{\overset{\leftarrow}{#1}}
\newcommand{\Di}{\mathcal{P}_{{\bf a},{\bf b}}}
\newcommand{\nl}{m}
\newcommand{\nr}{n}
\newcommand{\WB}[3]{ B_{#1|#2}(#3)}
\newcommand{\qWB}[4]{ \mathfrak B_{#1|#2}(#3,#4)}
\newcommand{\HA}[1]{\Hy_{#1}}
\newcommand{\unit}[1]{1_{\bf #1}}
\title{Canonical bases of the oriented skein category}
\author{Yaolong Shen}
\address{Department of Mathematics, University of Virginia, Charlottesville, VA 22904}
\email{ys8pfr@virginia.edu}
\begin{document}
\maketitle

\begin{abstract}
		We develop a bar involution and canonical basis for every morphism space of the oriented skein category through a diagrammatic approach. In particular, our construction gives rise to Kazhdan-Lusztig type bases on quantized walled Brauer algebras.
	\end{abstract}
	
	\maketitle

\section{Introduction}
The {\em walled Brauer algebra} $\WB{\nl}{\nr}{\delta}$ is a distinguished subalgebra of the classical Brauer algebra $B_{\nl+\nr}(\delta)$, where $\delta$ is a parameter. It was introduced independently by Koike \cite{Ko89} and Turaev \cite{Tur90}, motivated in part by a Schur duality between $\WB{\nl}{\nr}{\delta}$ and the general linear group $GL_k$ arising from mutually commuting actions on the mixed tensor space $V^{\otimes \nl}\otimes {V^*}^{\otimes \nr}$, where $V$ is the natural representation of $GL_k$; see also \cite{BCHLLS}. 

 Leduc \cite{Le94} introduced a two-parameter deformation, $\qWB{\nl}{\nr}{q}{t}$, of the walled Brauer algebra using (non-diagrammatic) generators and relations. In this article, we will refer to this deformation as the {\em quantized walled Brauer algebra}. Subsequently, Dipper, Doty, and Stoll \cite{DDS14} employed oriented tangles to generate $\qWB{\nl}{\nr}{q}{t}$ in terms of diagrams, resulting in the construction of an integral basis. In \cite{En03}, Enyang demonstrated that $\qWB{\nl}{\nr}{q}{t}$ exhibits a cellular structure. Building upon this cellular structure, Rui and Song \cite{RS15} provided a criterion for the semisimplicity of quantized walled Brauer algebras $\qWB{\nl}{\nr}{q}{t}$ and classified their simple modules over an arbitrary field.


The quantized walled Brauer algebra also appears in the study of the oriented skein category \cite{B17}. The category was initially introduced as the Hecke category in \cite[\S 5.2]{Tur90}, with a distinct normalization of crossings, while in \cite[Definition 2.1]{QS19}, it is referred to as the quantized oriented Brauer category. The oriented skein category is a ribbon category that underpins the definition of the HOMFLY-PT invariant of an oriented link, in the same way that the Temperley-Lieb category underpins the Jones polynomial.


In this article, we adopt a similar perspective by considering the quantized walled Brauer algebra as a morphism space within the oriented skein category $\OS$ and study the morphism spaces of $\OS$ in general.

The oriented skein category is a quotient of the category of framed oriented tangles. Its objects are represented as words using the ${\ua, \da}$ alphabet, and its morphisms are constructed using ribbons (see \S\ref{sec:def}). These ribbons consist of bubbles, strands with orientations, and crossings with signs. The basis theorem for the morphism spaces of $\OS$ has been well-established in \cite{Tur90}. For each morphism space of $\OS$, we select a specific basis formed by the set of "reduced" ribbons containing only positive crossings, referred to as the {\em standard basis}, see Proposition~\ref{prop:basis}.

The first observation we made in this article is that morphism spaces of $\OSqq$ also admit a unique bar involution $\psi$, which flips positive and negative crossings while leaving the cups and caps unchanged as detailed in Proposition~\ref{prop:bar}. We note that the type A Hecke algebra also arises as a special case of the morphism space of $\OSqq$ and  this bar involution coincides with the usual bar map of the type A Hecke algebra.

We define the length of a reduced ribbon to be the number of crossings within it. The second observation in this article is that when we apply $\psi$ to an element of the standard basis, the resulting ribbon can be expressed in terms of positive reduced ribbons with shorter lengths, and the leading coefficient is $1$. Therefore, it follows from the Lusztig's lemma (cf. \cite[Lemma 24.2.1]{Lu94}) that
every morphism space of $\OSqq$ also admits a Kazhdan-Lusztig type basis (called canonical basis), see Theorem~\ref{thm:cbhom}. 

An immediate consequence of the compatibility of the bar involutions is that the usual type $A$ Kazhdan-Lusztig basis is a part of the canonical basis that we obtain. Moreover, we observe that the transition matrices between the canonical basis and standard basis is independent of the variable $t$. The same phenomenon also appeared in \cite[\S 5.2]{FG95},\cite{CS22}.

It will be interesting to know whether the canonical basis (for specialized parameters) admits positivity. Evidence supporting such a positivity property has been found in some low-rank cases, as demonstrated in Example~\ref{ex:n2m1} and Example~\ref{ex:n3m1}. Additionally, it would be of significant interest to explore the connection between our canonical bases with the ones of quantum groups through the mixed Schur-Weyl duality.

The paper is organized as follows. In Section~\ref{sec:OS}, we provide an in-depth review of the oriented skein category and explain how to construct the bar involution and the canonical basis on its morphism spaces. In Section~\ref{sec:qWB}, we review the algebraic definition of the quantized walled Brauer algebra $\qWB{\nl}{\nr}{q}{t}$ and view it as a particular morphism space of $\mathcal{OS}(q-q^{-1},t)$. We then construct the bar involution pure algebraically and we give examples of the canonical bases in low rank cases. 


{\bf Acknowledgement.} The author thanks his advisor Weiqiang Wang for suggesting the topic and many constructive discussions. Additionally, he thanks Jon Brundan for his insightful suggestion on putting quantized walled Brauer algebras in the framework of the oriented skein category. Moreover, he thanks an anonymous expert for suggesting constructing the canonical basis for all morphism spaces of the oriented skein category under a diagrammatical viewpoint. The author is supported by Graduate Research Assistantship from Wang's NSF grant (DMS-2001351) and a semester fellowship from University of Virginia.

\section{Oriented skein category}
\label{sec:OS}
In this section, we recall the definition and basic properties of the oriented skein category. We then construct bar-involutions and Kazhdan-Lusztig type bases for all morphism spaces of this category.
\subsection{Definition}
\label{sec:def}
We first to recall briefly the definition of the category $\mathcal{FOT}$ of framed oriented
tangles, which is the framed analog of the oriented tangle category $\mathcal{OT}$ introduced by
Turaev in \cite{Tur90}. The objects of $\mathcal{FOT}$ are given by the set $\uda$ of all words in the alphabet $\{\ua,\da\}$. Tensor products of objects are given by concatenation and the unit object is the empty word $\varnothing$. For example we have $\ua \otimes \da \otimes \ua=\ua\da\ua$. 

For any two given words ${\bf a}=a_m\cdots a_1$ and ${\bf b}=b_n\cdots b_1\in \uda$, the morphism space $Hom_{\mathcal{FOT}}(\bf a,\bf b)$ is the collection of isotopy classes of framed
oriented tangles in $[0,1]\times [0,1]\times \R$ with boundary 
\[
\left\{
(\frac{m+1-i}{m+1},0,0)\mid i=1,\ldots,m
\right\}
\cup
\left\{
(\frac{n+1-j}{n+1},0,0)\mid j=1,\ldots,n
\right\}
\]
such that the orientation near the boundary points $(\frac{m+1-i}{m+1},0,0)$ and $(\frac{n+1-j}{n+1},0,0)$ are determined by $a_i$ and $b_j$, respectively. We will draw these tangles by projecting onto the $xy$-plane such that there is no triple crossing and we keep track the information of 'over' and 'under' data at each crossing. The resulting diagram will be referred to as {\em $(\bf a,\bf b)$-ribbons}. The tensor product of morphisms is given by horizontal concatenation while the composition of morphisms is given by vertical stacking. For example we have $f\otimes g=fg$ and $f\circ g=\overset{f}{g}$.

\begin{example}
Below is an example of a $(\da\ua\ua\da\ua\ua,\ua\da\ua\ua)$-ribbon.
\[
    \resizebox{!}{100pt}{\begin{tikzpicture}[blue,
thick,decoration={
    markings,
    mark=at position 1 with {\arrow{>}}}
    ] 
    \begin{knot}[clip width=10,
clip radius=15pt,consider self intersections=true,]
\strand[blue,thick] (0,0) to (0,-2)
to [out=down,in=left] (1,-3)
to [out=right,in=down] (2.5,-2.5)
to [out=up,in=right] (2.25,-2)
to [out=left,in=up] (2,-2.2);
\strand[blue,thick] (2,-2.2) to (2,-4);
\strand[blue,thick] (2,-4) to 
[out=down,in=left] (2.5,-4.5)
to (4.5,-4.5)
to [out=right,in=up] (5,-5);
\strand[blue,thick] (3,-5)
to (3,-4)
to [out=up,in=left] (3.5,-3.5)
to [out=right,in=down] (4,-3)
to (4,0);
\strand[blue,thick] (2,0)
to [out=down,in=left] (2.5,-0.5)
to (4.5,-0.5)
to [out=right,in=down] (5,0);
\strand[blue,thick] (4.5,-1.4)
to [out=up,in=right] (4.1,-1)
to (3.6,-1)
to [out=left,in=up] (3.2,-1.4)
to [out=down,in=left] (3.6,-1.8);
\strand[blue,thick] (3.6,-1.8) to
(3.9,-1.8);
\strand[blue,thick]
(4.1,-1.8) to [out=right,in=down] (4.5,-1.4);
\strand[blue,thick] (0,-5) 
to [out=up,in=left] (1,-4)
to [out=right,in=up] (2,-5);
\strand [blue,thick] (-0.5,-5) to
[out=up,in=left] (0,-4.5);
\strand[blue,thick] (0.2,-4.5) to
[out=right,in=up] (0.7,-5);
\end{knot}
\draw[postaction={decorate}](0,-0.01)--(0,0);
\draw[postaction={decorate}](4,-0.01)--(4,0);
\draw[postaction={decorate}](5,-0.01)--(5,0);
\draw[postaction={decorate}](2,-4.99)--(2,-5);
\draw[postaction={decorate}](4.5,-1.399)--(4.5,-1.401);
\draw[postaction={decorate}](-0.5,-4.99)--(-0.5,-5);
\end{tikzpicture}}
\]
\end{example}

Moreover, the isotropy translates into the equivalence relations on diagrams generated by planar isotropy together with the oriented
Reidemeister Moves (FRI), (RII) and (RIII) as listed in \cite[Figure 1]{B17}. As justified carefully in \cite[\S I.4.6]{Tur16}, any $(\bf a,\bf b)$-ribbon is isotopic to a {\em generic} one, meaning that all of its critical points are local maxima and minima, and all crossings occur away from the critical points. From now on when we only consider generic ribbons. 

Now let $\K$ be any field and fix parameters $z,t\in \K^*$. Let $\mathscr I$ be the tensor ideal of the $\K$-linearization of $\mathcal{FOT}$ uniquely determined by the Conway skein relation (S), the twisting relation (T) and the free loop relation (L) as follows:
\begin{center}
\resizebox{\linewidth}{!}{
\begin{tikzpicture}[
thick,decoration={
    markings,
    mark=at position 1 with {\arrow{>}}}
    ] 
    \begin{knot}[clip width=10,
clip radius=15pt,
consider self intersections=true,]
\node at (2.5,1){$-$};
\node at (5.5,1){$=z$};
\strand[blue,thick] (0,0) 
to [out=up, in=down] (2,2);
\strand[blue,thick] (2,0) 
to [out=up, in=down] (0,2);
\strand[blue,thick] (5,0) 
to [out=up, in=down] (3,2);
\strand[blue,thick] (3,0) 
to [out=up, in=down] (5,2);
\strand[blue,thick] (6,0) to (6,2);
\strand[blue,thick] (7,0) to (7,2);
\node at (3,-0.5) {(S)};
\strand[blue, thick] (9,0)
    to[out=up, in=down] (9,0.5) 
    to[out=up, in=left] (9.5,1.5)
    to[out=right, in=up]  (10,1)
    to[out=down, in=right]  (9.5,0.5)
    to[out=left, in=down]  (9,1.5)
    to[out=up, in=down]  (9,2);
    \node at (10.5,1){$=t$};
    \strand[blue,thick] (11,0) to (11,2);
    \node at (10.5,-0.5){(T)};
    \strand[blue, thick] (13,0.5)
    to [out=left,in=down] (12.5,1)
    to [out=up,in=left] (13,1.5)
    to [out=right,in=up] (13.5,1)
    to [out=down,in=right] (13,0.5);
    \node at (14.5,1){$=\frac{t-t^{-1}}{z}\unit{\varnothing}$};
    \node at (14,-0.5){(L)};
\end{knot}
\draw[blue,postaction={decorate}] (0,1.95)--(0,2);
\draw[blue,postaction={decorate}] (2,1.95)--(2,2);
\draw[blue,postaction={decorate}] (3,1.95)--(3,2);
\draw[blue,postaction={decorate}] (5,1.95)--(5,2);
\draw[blue,postaction={decorate}] (6,1.95)--(6,2);
\draw[blue,postaction={decorate}] (7,1.95)--(7,2);
\draw[blue,postaction={decorate}] (9,1.95)--(9,2);
\draw[blue,postaction={decorate}] (11,1.95)--(11,2);
\draw[blue,postaction={decorate}] (13.5,1.01)--(13.5,0.99);
\end{tikzpicture}}
\end{center}
 The oriented skein category $\OS$ is defined to be the quotient category of the $\K$-linearization of $\mathcal{FOT}$ with respect to $\mathscr{I}$. With a different normalization of crossings, the category $\OS$ was also introduced
in \cite[\S 5.2]{Tur90} as the Hecke category. Besides, in \cite[Definition 2.1]{QS19} it is called the quantized oriented Brauer category while others may call $\OS$ the framed HOMFLY-PT skein category.

Set
\[
\begin{tikzpicture}[
thick,blue,decoration={
    markings,
    mark=at position 0.5 with {\arrow{>}}}
    ] 
    \node at (-0.5,0) {$A=$};
    \draw[postaction={decorate}] (0,0)to [bend left](1,0);
\end{tikzpicture},\begin{tikzpicture}[
thick,blue,decoration={
    markings,
    mark=at position 0.5 with {\arrow{>}}}
    ] 
    \node at (-0.5,0) {$\opp{A}=$};
    \draw[postaction={decorate}] (1,0)to [bend right](0,0);
\end{tikzpicture},\begin{tikzpicture}[
thick,blue,decoration={
    markings,
    mark=at position 0.5 with {\arrow{>}}}
    ] 
    \node at (-0.5,0) {$U=$};
    \draw[postaction={decorate}] (0,0)to [bend right](1,0);
\end{tikzpicture},
\begin{tikzpicture}[
thick,blue,decoration={
    markings,
    mark=at position 0.5 with {\arrow{>}}}
    ] 
    \node at (-0.5,0) {$\opp{U}=$};
    \draw[postaction={decorate}] (1,0)to [bend left](0,0);
\end{tikzpicture},\begin{tikzpicture}[
thick,blue,decoration={
    markings,
    mark=at position 1 with {\arrow{>}}}
    ] 
    \node at (-0.5,0) {$X_+=$};
    \draw (0,0.5)--(0.4,0.1);
    \draw[postaction={decorate}](0.6,-0.1)--(1,-0.5);
    \draw[postaction={decorate}] (1,0.5)--(0,-0.5);
\end{tikzpicture},
\begin{tikzpicture}[
thick,blue,decoration={
    markings,
    mark=at position 1 with {\arrow{>}}}
    ] 
    \node at (-0.5,0) {$X_-=$};
    \draw (1,0.5)--(0.6,0.1);
    \draw[postaction={decorate}]
        (0.4,-0.1)--(0,-0.5);
    \draw[postaction={decorate}] (0,0.5)--(1,-0.5);
\end{tikzpicture}.
\]
Following a corollary of a more general result of Turaev \cite[Lemma I.3.3]{Tur16}, the monoidal category $\OS$ has a presentation in terms of morphisms $A,\opp{A},U,\opp{U},X_{\pm}$ with relations (cf. \cite{B17})
\begin{align}
\label{eq:rel1}
    (\da A)\circ (U\da)=&\da=(\opp{A}\da)\circ (\da \opp{U}), \\
    (\ua \opp{A})\circ (\opp{U}\ua)=&\ua=(A \ua)\circ(\ua U),\\
    \label{eq:Z}
    (A \ua\ua)\circ(\ua A \da\ua\ua)\circ(\ua\ua X_\pm\ua\ua)\circ&(\ua\ua\da U\ua)\circ(\ua\ua U)= \\
    (\ua\ua\opp{A})\circ (\ua\ua\da\opp{A}\ua)\circ&(\ua\ua X_\pm \ua\ua)\circ(\ua\opp{U}\da\ua\ua)\circ(\opp{U}\ua\ua), \notag\\
    (X_+\da)\circ(\da X_+)\circ(X_+\da)&=(\da X_+)\circ(X_+\da)\circ (\da X_+),\\
    X_\pm \circ X_\mp&=\da\da, \\
    \label{eq:X-} X_+&=X_-+z\da\da,\\
    (\da\opp{A})\circ (X_\pm \ua)\circ(\da U)&=t^{\pm1}\da,\\
    (A\da\ua)\circ(\ua X_\mp\ua)\circ(\ua\da U)&\circ(\ua\da \opp{A})\circ (\ua X_\pm \ua)\circ (\opp{U}\da\ua)=\da\ua,\\
    \label{eq:rel9}
    (\ua\da\opp{A})\circ (\ua X_\pm \ua)\circ (\opp{U}\da\ua)&\circ (A \da\ua)\circ(\ua X_\mp \ua)\circ(\ua\da U)=\ua\da,\\
    \label{eq:rel10}
        A\circ \opp{U}=\opp{A}\circ U&=\frac{t-t^{-1}}{z}.
\end{align}

\subsection{A standard basis}
Given any word $\bf a\in \uda$, we define $\bf a^+$ (resp. $\bf a^-$) to be the number of $\ua$ (resp. $\da$) in the word. For any two words ${\bf a}=a_m\ldots a_1,{\bf b}=b_n\ldots b_1$, an $(\bf a,\bf b)$-matching means a fixed bijection
\[
\{i\mid a_i=\ua\}\cup \{i'\mid b_i=\da\}\overset{\sim}{\to}\{i\mid b_i=\ua\}\cup \{i'\mid a_i=\da\}.
\]
Such a bijection only exists when $\bf a^+-\bf a^-=\bf b^+-\bf b^-$. An $(\bf a,\bf b)$-ribbon can be naturally viewed as a {\em lift} of a given $(\bf a,\bf b)$-matching if the boundary of each strand in the ribbon consists of a pair of points which correspond under the matching; in particular, it contains no “floating bubbles”. An $(\bf a,\bf b)$-ribbon
is called {\em reduced} if no strand crosses itself and no two strands cross more than once.

\begin{proposition}
\cite[Theorem 5.1]{Tur90}
\label{prop:basis}
    The morphism space $Hom_{\OS}(\bf a,\bf b)$ is a free $\K$-module with a basis given by any set consisting of a reduced lift for each of the $(\bf a,\bf b)$-matchings. 
\end{proposition}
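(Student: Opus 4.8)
\emph{Proof strategy.} If $\mathbf a^{+}-\mathbf a^{-}\neq\mathbf b^{+}-\mathbf b^{-}$, the signed count of intersections with a generic horizontal slice is an isotopy invariant that forbids any $(\mathbf a,\mathbf b)$-ribbon, so $Hom_{\OS}(\mathbf a,\mathbf b)=0$; assume henceforth the matching condition. To treat an arbitrary ground field at once I would work over $\mathbf k=\Z[z^{\pm1},t^{\pm1}]$, with $\mathcal{OS}_{\mathbf k}$ defined by the same generators and relations (note $\tfrac{t-t^{-1}}{z}\in\mathbf k$, as $z$ is a unit); forming the quotient by the skein ideal commutes with base change, so it suffices to show $Hom_{\mathcal{OS}_{\mathbf k}}(\mathbf a,\mathbf b)$ is $\mathbf k$-free on a fixed choice $\{R_{\mu}\}$ of one reduced lift per matching $\mu$.

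\emph{Spanning.} Induct on the number of crossings of an $(\mathbf a,\mathbf b)$-ribbon $D$. If $D$ has a closed component, apply (S) repeatedly at its crossings; each step gives $D$ with one crossing switched plus $z$ times a smoothing with strictly fewer crossings, and finitely many switches unknot and unlink that component up to planar isotopy, after which (T) trivializes its framing and (L) removes it at the cost of $\tfrac{t-t^{-1}}{z}$. If $D$ has no closed component but is not reduced, planarity produces an innermost kink, deleted by (T) up to a factor $t^{\pm1}$ and one crossing, or an innermost bigon between two strands; applying (S) at one of its crossings either reveals an (RII) pattern to be deleted, or switches that crossing into an (RII) pattern, plus $z$ times a smoothing with one fewer crossing (possibly with a new closed component, treated as above). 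All output diagrams are strictly shorter, so by induction $D$ lies in the $\mathbf k$-span of reduced lifts with no floating bubbles. Finally, switching crossings via (S) preserves reducedness and the underlying matching while producing only strictly shorter error terms, so induction reduces any reduced lift to the chosen representative $R_{\mu}$ of its matching; hence $\{R_{\mu}\}$ spans.

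\emph{Linear independence.} For each $N\ge1$ there is a monoidal functor $F_{N}\colon\mathcal{OS}_{\Q(q)}(q-q^{-1},q^{N})\to U_{q}(\mathfrak{gl}_{N})\text{-mod}$ sending $\ua\mapsto V$, $\da\mapsto V^{*}$, with $U,\opp{U},A,\opp{A}$ going to the (co)evaluations and $X_{\pm}$ to the $R$-matrix braidings; one checks (S),(T),(L) hold in the ribbon normalization, so $F_N$ is well defined. For $N$ exceeding the number of boundary points of $(\mathbf a,\mathbf b)$, the first fundamental theorem for mixed tensor invariants of $U_{q}(\mathfrak{gl}_{N})$ gives $\dim Hom=\#\{\text{matchings}\}$ with the matching diagrams a basis, so the $F_{N}(R_{\mu})$ are $\Q(q)$-linearly independent. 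If $\sum_{\mu}c_{\mu}R_{\mu}=0$ in $Hom_{\mathcal{OS}_{\mathbf k}}(\mathbf a,\mathbf b)$ with $c_{\mu}\in\mathbf k$, then applying $F_{N}$ (specialize $z=q-q^{-1},\,t=q^{N}$, then the functor) forces $c_{\mu}(q-q^{-1},q^{N})=0$ in $\Q(q)$ for all $\mu$ and all large $N$; so each Laurent polynomial $c_{\mu}$ vanishes on infinitely many pairwise distinct curves $\{(q-q^{-1},q^{N})\}_{q}$ in the $(z,t)$-plane and must be $0$. Hence $\{R_{\mu}\}$ is $\mathbf k$-independent, and base change along $\mathbf k\to\K$ finishes the proof; since crossing-switching is unitriangular for the length filtration, the conclusion is independent of which reduced lifts were chosen.

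The hard part is linear independence. Spanning is a finite bookkeeping induction internal to (S),(T),(L), whereas independence needs genuine external input --- the $U_{q}(\mathfrak{gl}_{N})$ realization and the first fundamental theorem --- plus the passage, using infinitely many of the functors $F_{N}$, from the one-parameter specializations to the generic two-parameter ring and then to an arbitrary field. Alternatively one can replace this half by a Diamond-Lemma argument verifying confluence of an explicit rewriting system with the $R_{\mu}$ as normal forms, the price being the verification of all overlap ambiguities against (S),(T),(L) and (RIII).
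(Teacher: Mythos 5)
The paper does not prove this proposition; it is quoted from Turaev \cite[Theorem 5.1]{Tur90}, so there is no internal argument to compare against. Your sketch is the standard two-step proof and, as far as I can tell, essentially Turaev's own strategy: spanning by a crossing-count induction internal to the skein relations, linear independence via ribbon functors $F_N$ to $U_q(\gl_N)$-modules together with the first fundamental theorem for mixed tensor invariants (cf.\ Kosuda--Murakami \cite{KM93}), and descent to an arbitrary field by base change from $\Z[z^{\pm1},t^{\pm1}]$. The overall logic is sound, including the closing unitriangularity remark explaining why the choice of reduced lift is immaterial. Two places deserve more care than the sketch gives them. In the spanning step, the existence of an innermost kink or \emph{empty} bigon in a non-reduced, closed-component-free generic diagram is a genuine lemma about immersed planar arcs: a naive innermost-disc search can bottom out at an empty triangle, which must first be slid aside by an RIII move before a monogon or bigon appears, so this should be argued rather than asserted. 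In the independence step, the precise value of $t$ for which $F_N$ respects (T) and (L) depends on the chosen ribbon structure on $U_q(\gl_N)$, and the first fundamental theorem together with the dimension count for $N$ large should be cited explicitly; neither point is fatal, since any infinite family of specializations with pairwise distinct $t$-values feeds the final Laurent-polynomial vanishing argument.
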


In order to construct the canonical basis of the morphism space, we first pin down a specific choice of a set consisting of a reduced lift for each $(\bf a,\bf b)$-matching. In any reduced $(\mathbf{a}, \mathbf{b})$-ribbon, crossings fall into two distinct types. Each crossing is either isotopic to $X^+$ or $X^-$ while maintaining the orientation. Consequently, we refer crossings isotopic to $X^+$ as {\em positive} crossings and those isotopic to $X^-$ as {\em negative} crossings.

\begin{example}
Below is an example of a reduced $(\da\ua,\ua\da\da\ua\da\ua)$-ribbon.
\[
    \begin{tikzpicture}[blue,
thick,decoration={
    markings,
    mark=at position 1 with {\arrow{>}}}
    ] 
    \begin{knot}[clip width=10,
clip radius=15pt,consider self intersections=true,]
\strand[blue,thick] (0,2)
to (0,0.5)
to [out=down,in=left] (0.5,0)
to [out=right,in=down] (1,0.5)
to [out=up,in=left] (1.5,1)
to (3,1)
to [out=right,in=down] (4,2);
\strand[red,thick] (-1,0)
to (-1,1)
to [out=up,in=left] (-0.5,1.5)
to (2.5,1.5)
to [out=right,in=down] (3,2);
\strand[green,thick] (2,2) to (2,0);
\strand[yellow,thick] (-0.6,2)
to [out=down,in=left] (-0.4,1.8)
to (0.4,1.8)
to [out=right,in=down] (0.6,2);
\end{knot}
\draw[postaction={decorate}](4,1.99)--(4,2);
\draw[yellow,postaction={decorate}](-0.6,1.99)--(-0.6,2);
\draw[green,postaction={decorate}](2,1.99)--(2,2);
\draw[red,postaction={decorate}](-1,0.01)--(-1,0);
\end{tikzpicture}
\]
In this reduced ribbon, the  only positive crossing is at the green-blue intersection, while the negative crossings correspond to the blue-yellow, green-red and blue-red intersections.
\end{example}

For each reduced $(\bf a,\bf b)$-matching, there is a unique lift whose crossings are all of positive types, we call such ribbons {\em positive ribbons}. Let $\Di$ denote the set of positive $(\bf a,\bf b)$-ribbons.
\begin{corollary}
    $\Di$ forms a $\K$-basis of the morphism space $Hom_{\OS}(\bf a,\bf b)$.
\end{corollary}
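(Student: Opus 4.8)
The plan is to derive the statement directly from Proposition~\ref{prop:basis}: that result says that \emph{any} set consisting of one reduced lift for each $(\bf a,\bf b)$-matching is a $\K$-basis of $Hom_{\OS}(\bf a,\bf b)$, so it suffices to check that $\Di$ is exactly such a set. Concretely, I would show that the rule ``a matching $\mapsto$ its positive lift'' is a bijection from the (finite) set of $(\bf a,\bf b)$-matchings onto $\Di$.

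Existence is short given what is already available. By Proposition~\ref{prop:basis} every $(\bf a,\bf b)$-matching admits a reduced lift; starting from such a lift and switching the over/under datum at each crossing to the resolution isotopic to $X_+$ — exactly one of the two resolutions is of positive type, as dictated by the local strand orientations — produces a reduced \emph{positive} lift of the same matching. Moreover a ribbon determines its underlying matching by reading off the endpoints of each strand, so distinct matchings have distinct positive lifts; hence, once uniqueness of the positive lift is granted, $\Di$ contains exactly one reduced lift for each $(\bf a,\bf b)$-matching and Proposition~\ref{prop:basis} applies verbatim to conclude that $\Di$ is a $\K$-basis of $Hom_{\OS}(\bf a,\bf b)$.

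The real content, and the step I expect to be the main obstacle, is the uniqueness claim already asserted in the discussion preceding the corollary: any two reduced positive $(\bf a,\bf b)$-ribbons lifting the same matching are isotopic in $\mathcal{FOT}$. This is the oriented-skein incarnation of the fact that the positive braid lift $T_w$ of a permutation $w$ is independent of the chosen reduced word. I would prove it by analyzing how two reduced diagrams of a fixed matching are related: they are connected by a finite sequence of planar isotopies and oriented Reidemeister moves, and since both ends of this sequence have all crossings positive, one arranges the moves so that only the positive braid relation $(X_+\da)\circ(\da X_+)\circ(X_+\da)=(\da X_+)\circ(X_+\da)\circ(\da X_+)$, the cup/cap straightening and slide relations \eqref{eq:rel1}--\eqref{eq:rel9}, and (transient) oriented RII moves creating and then cancelling an $X_\pm$ pair are used — never the genuine skein relation (S), which is the only relation mixing the two crossing types. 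Carefully enumerating the possible bigon- and triangle-removal moves between two reduced diagrams and checking that each is realized by these relations is the technical heart of the argument; it is precisely the content underlying the existence-and-uniqueness statement quoted just before the corollary.
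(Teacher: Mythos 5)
Your proposal matches the paper's approach: the corollary in fact carries no proof in the source, being presented as an immediate consequence of Proposition~\ref{prop:basis} together with the (also unproved) assertion just before it that each matching has a unique reduced positive lift, and you have correctly identified that uniqueness claim as the only real content. One caveat on your sketch of that claim: the phrase ``they are connected by a finite sequence of planar isotopies and oriented Reidemeister moves'' presupposes the isotopy you are trying to establish; the fix is to work first at the level of flat reduced diagrams of the matching (which \emph{are} connected by flat/planar Reidemeister-type moves) and then observe that each such flat move lifts to an honest oriented RII/RIII move once all crossings are resolved positively.
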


\subsection{A bar involution}
In the case when $\bf a={\bf b}=\ua^m$, the morphism space $Hom_{\OS}(\ua^m,\ua^m)$ is isomorphic to the type A Hecke algebra, provided that the parameters are appropriately specialized. It is well known that the Hecke algebra of type A admits a bar involution and a Kazhdan-Lusztig type basis \cite{KL79}. The following proposition is a natural extension of such a bar involution on the morphism space of the oriented skein category.

\begin{proposition}
\label{prop:bar}
    For any two words ${\bf a}, {\bf b}\in \la \ua,\da\ra$, there is a unique involutive homomorphism $\psi$ on $Hom_{\OS}({\bf a},{\bf b})$ which satisfies
    \begin{equation}
        \psi(t)=t^{-1},\ \psi(z)=-z,\ \psi(A)=A,\ \psi(U)=U,\ \psi(\opp{A})=\opp{A},\ \psi(\opp{U})=\opp{U},\ \psi(X_\pm)=X_\mp.
    \end{equation}
\end{proposition}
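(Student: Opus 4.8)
The plan is to use the presentation of $\OS$ as a monoidal category recorded above: it is generated by $A,\opp{A},U,\opp{U},X_\pm$ subject to the relations \eqref{eq:rel1}--\eqref{eq:rel10}. Granting this presentation, uniqueness is immediate: every element of $Hom_{\OS}(\mathbf a,\mathbf b)$ is a $\K$-linear combination of composites of tensor products of these generators with identity morphisms, so an involutive homomorphism --- meaning an additive, $\circ$- and $\otimes$-compatible, identity-on-objects map that is semilinear over the subring $\Z[z^{\pm1},t^{\pm1}]\subseteq\K$ via $z\mapsto-z$, $t\mapsto t^{-1}$ --- is completely determined by the displayed values on generators. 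For existence I would define $\psi$ by those formulas on the free $\Z[z^{\pm1},t^{\pm1}]$-semilinear monoidal category generated by the six symbols $A,\opp{A},U,\opp{U},X_\pm$, and then check that it descends to the quotient, i.e.\ that it sends each defining relation to a consequence of the defining relations.

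Running through the relations: the two relations in \eqref{eq:rel1} and the analogous pair just below it involve only $A,\opp{A},U,\opp{U}$, which are $\psi$-fixed, hence are preserved verbatim. The relation \eqref{eq:Z}, the relation $(A\da\ua)\circ(\ua X_\mp\ua)\circ\cdots=\da\ua$, and \eqref{eq:rel9} each hold simultaneously in a ``$+$'' and a ``$-$'' version; since $\psi$ fixes $A,\opp{A},U,\opp{U}$ and interchanges $X_+$ with $X_-$, it sends the ``$+$'' instance of such a relation to the ``$-$'' instance and vice versa, so these are preserved. The relation $X_\pm\circ X_\mp=\da\da$ is manifestly $\psi$-stable. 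For the relations carrying parameters the choices $\psi(z)=-z$, $\psi(t)=t^{-1}$ are precisely what is needed: $\psi$ turns \eqref{eq:X-} into $X_-=X_+-z\,\da\da$, equivalent to \eqref{eq:X-}; it turns $(\da\opp{A})\circ(X_\pm\ua)\circ(\da U)=t^{\pm1}\da$ into its $\mp$ version; and it turns \eqref{eq:rel10} into $A\circ\opp{U}=\opp{A}\circ U=(t^{-1}-t)/(-z)=(t-t^{-1})/z$, i.e.\ back into \eqref{eq:rel10}.

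The only relation whose $\psi$-image is not literally another listed relation is the braid relation $(X_+\da)\circ(\da X_+)\circ(X_+\da)=(\da X_+)\circ(X_+\da)\circ(\da X_+)$, which $\psi$ sends to the same identity with $X_-$ in place of $X_+$. Here I would argue that the $X_-$-braid relation already follows from the listed relations: by $X_\pm\circ X_\mp=\da\da$ the morphism $X_-$ is a two-sided inverse of $X_+$ in $Hom_{\OS}(\da\da,\da\da)$, so $X_-\da$ and $\da X_-$ are inverse to $X_+\da$ and $\da X_+$, and composing the $X_+$-braid relation on either side with these inverses (equivalently, inverting both sides of it, which reverses the order of composition) yields the $X_-$-braid relation; hence $\psi$ is well defined on $\OS$. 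Finally $\psi^2$ fixes every generator and restricts to the identity on $\Z[z^{\pm1},t^{\pm1}]$, so $\psi^2=\mathrm{id}$ and $\psi$ is involutive. I expect this braid-relation step --- together with the bookkeeping that confirms the single sign choice $\psi(z)=-z$ simultaneously rescues (S)/\eqref{eq:X-}, the Reidemeister-I-type relation $(\da\opp{A})\circ(X_\pm\ua)\circ(\da U)=t^{\pm1}\da$, and \eqref{eq:rel10} --- to be the only point requiring genuine care; everything else is a direct substitution into the presentation.
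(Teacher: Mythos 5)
Your proposal is correct and takes the same approach as the paper, which simply states that it is straightforward to check that $\psi$ preserves the relations \eqref{eq:rel1}--\eqref{eq:rel10}. You have supplied the verification that the paper leaves implicit, and you correctly isolate the one genuinely non-automatic point: the braid relation is stated only for $X_+$, so one must observe that the $X_-$-version follows by inverting both sides using $X_\pm\circ X_\mp=\da\da$.
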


\begin{proof}
    It is straightforward to check that $\psi$ preserves the relations \eqref{eq:rel1}--\eqref{eq:rel10}.
\end{proof}

We will refer to $\psi$ as the bar involution on $Hom_{\OS}({\bf a},{\bf b})$. For any $(\mathbf{a},\mathbf{b})$-ribbon $\mathbf{T}\in Hom_{\OS}({\bf a},{\bf b})$, the image $\psi(\mathbf{T})$ corresponds to the reduced lift of the same $(\mathbf{a},\mathbf{b})$-matching, but with all positive and negative crossings swapped.

\begin{example}
\[
     \begin{tikzpicture}[blue,
thick,decoration={
    markings,
    mark=at position 1 with {\arrow{>}}}
    ] 
    \begin{knot}[clip width=10,
clip radius=15pt,consider self intersections=true,]
\strand[blue,thick] (0,2)
to (0,0.5)
to [out=down,in=left] (0.5,0)
to [out=right,in=down] (1,0.5)
to [out=up,in=left] (1.5,1)
to (3,1)
to [out=right,in=down] (4,2);
\strand[blue,thick] (-1,0)
to (-1,1)
to [out=up,in=left] (-0.5,1.5)
to (2.5,1.5)
to [out=right,in=down] (3,2);
\strand[blue,thick] (2,2) to (2,0);
\strand[blue,thick] (-0.6,2)
to [out=down,in=left] (-0.4,1.8)
to (0.4,1.8)
to [out=right,in=down] (0.6,2);
\end{knot}
\draw[postaction={decorate}](4,1.99)--(4,2);
\draw[postaction={decorate}](-0.6,1.99)--(-0.6,2);
\draw[postaction={decorate}](2,1.99)--(2,2);
\draw[postaction={decorate}](-1,0.01)--(-1,0);
\node [black] at (5,1){$\overset{\psi}{\Longrightarrow}$};
\begin{knot}[clip width=10,
clip radius=15pt,]
\strand[blue,thick] (6.4,2)
to [out=down,in=left] (6.6,1.8)
to (7.4,1.8)
to [out=right,in=down] (7.6,2);
\strand[blue,thick] (9,2) to (9,0);
\strand[blue,thick] (6,0)
to (6,1)
to [out=up,in=left] (6.5,1.5)
to (9.5,1.5)
to [out=right,in=down] (10,2);
\strand[blue,thick] (7,2) to (7,1.85);
\strand[blue,thick] (7,1.75)
to (7,1.6);
\strand[blue,thick] (7,1.4)
to (7,0.5)
to [out=down,in=left] (7.5,0)
to [out=right,in=down] (8,0.5)
to [out=up,in=left] (8.5,1)
to (10,1)
to [out=right,in=down] (11,2);
\end{knot}
\draw[postaction={decorate}](11,1.99)--(11,2);
\draw[postaction={decorate}](6.4,1.99)--(6.4,2);
\draw[postaction={decorate}](9,1.99)--(9,2);
\draw[postaction={decorate}](6,0.01)--(6,0);
\end{tikzpicture}
\]
\end{example}

It is not hard to see that $\psi$ coincides with the bar involution on the Hecke algebra of type A when $\bf a$ and $\bf b$ are specialized. 

\subsection{Canonical bases}
From now on we suppose $\K=\Q(q,t)$ where $q,t$ are indeterminant and we fix $z=q-q^{-1}$. 

For any ribbon ${\bf T}\in Hom_{\OSqq}({\bf a},{\bf b})$, the length of it is given by:
 \begin{equation}
        \label{eq:length}
        \ell(\bf T):=\#\text {\{crossings in $\bf T$\}}.
    \end{equation}

The following lemma is a crucial step toward the construction of the canonical basis.
\begin{lemma}
\label{lemma:psiT}
    For any $\bf T\in \Di$, we have
$$\psi({\bf T})={\bf T}+\sum\limits_{\substack{{\bf T}'\in  \Di\\\ell({\bf T}')<\ell({\bf T})}}
x_{{\bf T}', {\bf T}}\mathcal {\bf T}'$$
for some $x_{{\bf T}', {\bf T}}\in \Z[q, q^{-1}].$
\end{lemma}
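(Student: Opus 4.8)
The strategy is to induct on $\ell(\mathbf{T})$ and track what happens to a positive reduced ribbon when every crossing is flipped. Given $\mathbf{T}\in\Di$, the diagram $\psi(\mathbf{T})$ is the reduced lift of the same $(\mathbf{a},\mathbf{b})$-matching with all $X_+$ crossings replaced by $X_-$ crossings; this is a morphism in $Hom_{\OSqq}(\mathbf{a},\mathbf{b})$ but is typically not in $\Di$, so I must rewrite it in the standard basis $\Di$. The key local tool is the skein relation (S), which in the present normalization reads $X_+ = X_- + z\,(\da\da)$ (relation \eqref{eq:X-}), equivalently $X_- = X_+ - z\,(\da\da)$, together with its orientation-reversed versions obtained by bending strands using the cups and caps $A,\opp A,U,\opp U$. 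The plan is: pick any negative crossing in $\psi(\mathbf{T})$, apply the appropriate oriented form of (S) to resolve it. This expresses $\psi(\mathbf{T})$ as a sum of two diagrams: one in which that crossing is made positive, and one (weighted by $\pm z\in z\Z$, hence with coefficient in $\Z[q,q^{-1}]$ after $z=q-q^{-1}$) in which that crossing is smoothed out, strictly decreasing the number of crossings.

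I would then argue as follows. Repeatedly resolving the negative crossings of $\psi(\mathbf{T})$, one at a time, yields an expansion $\psi(\mathbf{T}) = \mathbf{T}^{\mathrm{all}+} + \sum_j c_j \mathbf{D}_j$, where $\mathbf{T}^{\mathrm{all}+}$ is the diagram with every crossing resolved to a positive crossing, each $\mathbf{D}_j$ has strictly fewer crossings than $\psi(\mathbf{T})$ (hence than $\mathbf{T}$, since $\ell(\psi(\mathbf{T}))=\ell(\mathbf{T})$), and each $c_j\in\Z[q,q^{-1}]$. The diagram $\mathbf{T}^{\mathrm{all}+}$ realizes the same $(\mathbf a,\mathbf b)$-matching as $\mathbf T$ with all crossings positive; since $\mathbf{T}$ is reduced and by Proposition~\ref{prop:basis} the reduced positive lift of a matching is unique up to the Reidemeister moves, isotopy together with moves (RII), (RIII), the twist relation (T), and the free loop relation (L) identify $\mathbf{T}^{\mathrm{all}+}$ with $\mathbf{T}$ itself, possibly up to powers of $t$ and factors of $(t-t^{-1})/z$ coming from removed kinks and closed loops — but one checks these scalars are in fact trivial here because resolving crossings of an already-reduced diagram to the "all positive" configuration only introduces curls/bigons that reduce via (RII) and isotopy without producing twist or loop factors, leaving leading coefficient exactly $1$. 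The remaining $\mathbf{D}_j$, having fewer crossings, are each handled by re-expanding any leftover negative crossings and then invoking the inductive hypothesis: each $\mathbf{D}_j$ lies in $Hom_{\OSqq}(\mathbf a,\mathbf b)$ and expands in $\Di$ with $\Z[q,q^{-1}]$-coefficients supported on ribbons of length at most $\ell(\mathbf D_j)<\ell(\mathbf T)$. Collecting terms gives the claimed formula.

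The base case $\ell(\mathbf{T})=0$ is immediate: a crossingless positive ribbon is fixed by $\psi$ (only cups and caps appear, and $\psi$ fixes $A,\opp A,U,\opp U$), so $\psi(\mathbf T)=\mathbf T$ with no lower-order terms. For the inductive step the only genuine subtlety — and the step I expect to be the main obstacle — is the bookkeeping in the paragraph above: verifying that when $\psi(\mathbf T)$ is fully resolved to positive crossings the resulting diagram reduces to $\mathbf T$ with coefficient \emph{exactly} $1$ and not some monomial $t^k$, and that all correction terms genuinely have strictly smaller length and coefficients in $\Z[q,q^{-1}]$ rather than in $\Q(q)$ or involving $t$. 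This requires a careful local analysis: each application of (S) in an oriented form is an honest identity in the ideal $\mathscr I$ whose "error term" is $\pm z$ times a diagram with one fewer crossing (where $z=q-q^{-1}\in\Z[q,q^{-1}]$ and the relation \eqref{eq:rel10}, should it ever be invoked to remove a loop, contributes $(t-t^{-1})/z$ which — I would argue — never actually arises when starting from a reduced ribbon), and each subsequent (RII)/isotopy simplification of the leading term introduces no scalar at all. Once this local accounting is in place, the induction on $\ell$ closes and the lemma follows.
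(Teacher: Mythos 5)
Your overall strategy — apply the skein relation to each crossing of $\psi(\mathbf{T})$, read off the all-positive resolution as the leading term, and control the lower-order corrections — is the same as the paper's. But two steps in your execution are off. First, the paragraph worrying whether $\mathbf{T}^{\mathrm{all}+}$ agrees with $\mathbf{T}$ only up to Reidemeister moves, powers of $t$, or loop factors reflects a misunderstanding: $\psi(\mathbf{T})$ is \emph{the same picture} as $\mathbf{T}$ with every crossing sign flipped, so choosing the $X_+$ summand at each crossing when expanding via \eqref{eq:X-} returns $\mathbf{T}$ on the nose. No curls or bigons appear at this stage, no isotopy is needed, and the leading coefficient is $1$ by inspection rather than by an argument about simplification.

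Second, and more seriously, your induction on $\ell(\mathbf{T})$ does not close. The correction diagrams $\mathbf{D}_j$ have strictly fewer crossings, but they are \emph{not} of the form $\psi(\mathbf{T}')$ for any $\mathbf{T}'\in\Di$: after smoothing a crossing the diagram is typically no longer reduced (bigons, caps stacked over cups, possibly floating loops), and its remaining crossings are not uniformly negative. The inductive hypothesis, which only tells you how $\psi$ acts on a reduced all-positive ribbon, therefore says nothing about expanding $\mathbf{D}_j$ in the basis $\Di$. The paper's argument is not inductive. It observes directly that the skein expansion has coefficients that are integer powers of $\pm z$ (so already in $\Z[q,q^{-1}]$), and then that rewriting the resulting diagrams into $\Di$ using relations \eqref{eq:rel1}--\eqref{eq:rel9} only lowers the crossing number, while the bubble relation \eqref{eq:rel10} — the source of the $(t-t^{-1})/z$ factor — is never invoked because $\mathbf{T}$ was reduced, so no floating loops arise. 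You correctly flag the $t$-independence as the main subtlety, but what is needed to settle it is exactly this direct analysis of the rewriting process; the induction you set up does not supply it.
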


\begin{proof}
    For any $\bf T\in \Di$, $\psi({\bf T})$ is a reduced $(\bf a,\bf b)$-ribbon with only negative crossings. Applying \eqref{eq:X-} to all the crossings of $\psi({\bf T})$, we see that \[ \psi({\bf T})={\bf T}+\sum_{T'\in  Hom_{\OSq}({\bf a},{\bf b})}a_{T',{\bf T}}T' \]for some $a_{T', {\bf T}}\in \K.$ Since the relation \eqref{eq:X-} does not produce new crossings, hence $a_{T',{\bf T}}=0$ unless $\ell(T')<\ell({\bf T})$. Thus we have
    \[ \psi({\bf T})={\bf T}+\sum\limits_{\substack{ T'\in  Hom_{\OSq}(\bf a,\bf b)\\\ell(T')<\ell({\bf T})}}a_{T',{\bf T}}T'. \]
    Since $\Di$ is a basis of $Hom_{\OSq}(\bf a,\bf b)$, we can rewrite $\psi({\bf T})-{\bf T}$ in terms of elements in $\Di$ using relations 
    \eqref{eq:rel1}--\eqref{eq:rel10}. Since ${\bf T}$ is reduced, no bubble will be produced or moved in the process. Hence \eqref{eq:rel10} will not be used. Moreover, all relations \eqref{eq:rel1}--\eqref{eq:rel9} only decrease the number of crossings. Hence we have $$\psi({\bf T})={\bf T}+\sum\limits_{\substack{{\bf T}'\in  \Di\\\ell({\bf T}')<\ell({\bf T})}}
x_{{\bf T}', {\bf T}}\mathcal {\bf T}'$$
for some $x_{{\bf T}', {\bf T}}\in \Z[q, q^{-1}].$
\end{proof}

By Lemma \ref{lemma:psiT} and Lusztig's lemma (cf. \cite[Lemma 24.2.1]{Lu94}), we obtain the canonical basis for $Hom_{\OSqq}(\bf a,\bf b)$ over $\K$.

\begin{theorem}\label{thm:cbhom}
There exists a unique basis of $Hom_{\OSqq}(\bf a,\bf b)$ over $\K$, called the canonical basis, written as $\{\mathcal C_{{\bf T}}|{\bf T}\in  \Di\}$ such that
\begin{align*}
&(1)~~\psi(\mathcal C_{{\bf T}})=\mathcal C_{{\bf T}},\\
&(2)~~\mathcal C_{{\bf T}}=\mathcal {\bf T}+\sum\limits_{\substack{{\bf T}'\in \Di \\ \ell({\bf T}')<\ell({\bf T})}}y_{{\bf T}', {\bf T}}\mathcal {\bf T}', ~\mbox{where } y_{{\bf T}', {\bf T}}\in q^{-1}\Z[q^{-1}].
\end{align*}
\end{theorem}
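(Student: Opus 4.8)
The plan is to invoke Lusztig's lemma in its standard form, taking as input the data assembled in Lemma~\ref{lemma:psiT}. The setting is the free $\K$-module $M=Hom_{\OSqq}(\bf a,\bf b)$ with standard basis $\Di$, equipped with the semilinear involution $\psi$ (semilinear in the sense that $\psi(q)=q^{-1}$, $\psi(t)=t^{-1}$) established in Proposition~\ref{prop:bar}. Lusztig's lemma requires a preorder on the index set $\Di$ with respect to which the bar involution is "unitriangular" with off-diagonal entries in $\Z[q,q^{-1}]$; here the preorder is given by the length function $\ell$ of \eqref{eq:length}, i.e. ${\bf T}'\preceq {\bf T}$ if $\ell({\bf T}')\le \ell({\bf T})$, and we additionally need that each interval $\{{\bf T}'\mid \ell({\bf T}')\le \ell({\bf T})\}$ is finite, which holds because for fixed $\bf a,\bf b$ there are only finitely many $(\bf a,\bf b)$-matchings and hence finitely many elements of $\Di$ altogether.

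First I would record the two hypotheses of Lusztig's lemma that follow from what is already proved: (i) $\psi$ is an involution on $M$ (Proposition~\ref{prop:bar}); and (ii) for each ${\bf T}\in\Di$, Lemma~\ref{lemma:psiT} gives $\psi({\bf T})={\bf T}+\sum_{\ell({\bf T}')<\ell({\bf T})}x_{{\bf T}',{\bf T}}{\bf T}'$ with $x_{{\bf T}',{\bf T}}\in\Z[q,q^{-1}]$. Then I would run the usual induction on $\ell({\bf T})$ to produce $\mathcal C_{\bf T}$. For $\ell({\bf T})=0$ (a positive ribbon with no crossings), $\psi({\bf T})={\bf T}$, so we set $\mathcal C_{\bf T}={\bf T}$. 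For the inductive step, assume $\mathcal C_{{\bf T}'}$ has been constructed for all ${\bf T}'$ with $\ell({\bf T}')<\ell({\bf T})$, each satisfying (1) and (2). Writing $\psi({\bf T})={\bf T}+\sum_{\ell({\bf T}')<\ell({\bf T})}x_{{\bf T}',{\bf T}}{\bf T}'$ and re-expanding each ${\bf T}'$ in terms of the $\mathcal C_{{\bf T}'}$ (possible by induction, since the change of basis is unitriangular), one obtains $\psi({\bf T})={\bf T}+\sum r_{{\bf T}'}\mathcal C_{{\bf T}'}$ with $r_{{\bf T}'}\in\Z[q,q^{-1}]$; applying $\psi$ and using $\psi^2=\mathrm{id}$, $\psi(\mathcal C_{{\bf T}'})=\mathcal C_{{\bf T}'}$ forces $r_{{\bf T}'}+\psi(r_{{\bf T}'})=0$ (the ${\bf T}$ coefficient check), so each $r_{{\bf T}'}$ lies in the $(-1)$-eigenspace of $\psi$ on $\Z[q,q^{-1}]$ — but note that here $\psi$ on scalars is not simply $q\mapsto q^{-1}$ since $\psi(z)=-z$ with $z=q-q^{-1}$, so I should be careful to state the bar action on $\K$ precisely; in any case the relevant fact is the standard one that any Laurent polynomial $r$ with $r+\bar r=0$ decomposes uniquely as $r=p-\bar p$ with $p\in q^{-1}\Z[q^{-1}]$. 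Setting $\mathcal C_{\bf T}={\bf T}+\sum p_{{\bf T}'}\mathcal C_{{\bf T}'}$ then yields a $\psi$-fixed element of the required form, and expanding back into $\Di$ gives (2) with $y_{{\bf T}',{\bf T}}\in q^{-1}\Z[q^{-1}]$. Uniqueness follows the standard argument: the difference of two candidates is $\psi$-fixed and lies in $\sum_{\ell({\bf T}')<\ell({\bf T})}q^{-1}\Z[q^{-1}]\,{\bf T}'$, and an induction on length together with the elementary fact that $q^{-1}\Z[q^{-1}]\cap(\text{$\psi$-fixed})=0$ forces it to vanish.

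The main obstacle, and the one point deserving genuine care, is the precise behaviour of the scalar involution. Because $\psi(q-q^{-1})=-(q-q^{-1})$ rather than $\psi$ acting as the naive $q\mapsto q^{-1}$, one must verify that $\psi$ restricted to $\Z[q,q^{-1}]\subset\K$ is still the standard bar involution $q\mapsto q^{-1}$ — equivalently, that $\psi(q)=q^{-1}$ — so that the classical decomposition $\Z[q,q^{-1}]=q^{-1}\Z[q^{-1}]\oplus\Z[q]$ relative to $\bar{\ }$ applies verbatim. This is consistent: $\psi(q)=q^{-1}$ gives $\psi(q-q^{-1})=q^{-1}-q=-z$, matching Proposition~\ref{prop:bar}. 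Once this is pinned down, everything else is the routine Lusztig recursion, and I would simply cite \cite[Lemma~24.2.1]{Lu94} for the abstract statement and indicate that Lemma~\ref{lemma:psiT} supplies exactly its hypotheses, keeping the written proof short.
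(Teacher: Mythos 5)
Your proof is correct and takes essentially the same route as the paper, which simply cites Lusztig's lemma together with Lemma~\ref{lemma:psiT}; you have merely unpacked the standard recursion that the citation encapsulates. Your aside about pinning down $\psi(q)=q^{-1}$ (rather than just $\psi(z)=-z$) is a worthwhile clarification and is consistent with the paper's later Lemma~\ref{lemma:bar}, where $\overline q = q^{-1}$ is stated explicitly.
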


\begin{remark}
    As in the proof of Lemma~\ref{lemma:psiT}. For any ${\bf T}\in \Di$, the process of expressing $\psi(T)$ in terms of elements in $\Di$ with shorter length does not produce or remove any bubble. Therefore, the transition matrices between the canonical basis and standard basis is independent of the variable $t$. The same phenomenon also appeared in \cite[\S 5.2]{FG95},\cite{CS22}.
\end{remark}

Fix $\bf a,\bf b\in \uda$ and let $\tilde{\bf a}={\bf a}\ua\da$. We define an embedding as follows:
\begin{align*}
i:Hom_{\OSqq}(\bf a,\bf b)&\to Hom_{\OSqq}(\tilde{\bf a},\bf b)\\
{\bf T} &\mapsto {\bf T}\curvearrowright
\end{align*}

\begin{proposition}
\label{cor:embedding}
    For any canonical basis element $\mathcal C_{{\bf T}}\in Hom_{\OSqq}({\bf a,\bf b})$, $i(\mathcal C_{{\bf T}})$ is a canonical basis element of $Hom_{\OSqq}(\tilde{\bf a},\bf b)$.
\end{proposition}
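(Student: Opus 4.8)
\textbf{Proof plan for Proposition~\ref{cor:embedding}.}

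The plan is to verify the two characterizing properties of the canonical basis from Theorem~\ref{thm:cbhom} for the element $i(\mathcal C_{\bf T})$ directly. First I would establish that the map $i$ is well-behaved with respect to the standard basis: attaching the cup-cap configuration $\curvearrowright$ (i.e.\ the morphism $A\da$, a rightward-oriented cup followed underneath by the identity, creating the extra $\ua\da$ at the bottom) sends a positive $(\bf a,\bf b)$-ribbon to a positive $(\tilde{\bf a},\bf b)$-ribbon, since no new crossing is introduced and the attached piece has no crossings. Moreover this assignment ${\bf T}\mapsto {\bf T}\curvearrowright$ is a bijection from $\Di$ onto the subset of $\mathcal P_{\tilde{\bf a},\bf b}$ consisting of those positive ribbons in which the two new boundary points at the bottom are matched to each other by a ``small'' arc disjoint (after isotopy) from the rest of the diagram. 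I would also record that $\ell({\bf T}\curvearrowright)=\ell({\bf T})$, so $i$ preserves length exactly.

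Next I would check compatibility of $i$ with the bar involution. Since $\psi$ is a homomorphism of morphism spaces fixing $A$, $\opp A$, $U$, $\opp U$ and swapping $X_\pm$, and since the horizontal/vertical composition operations used to build ${\bf T}\curvearrowright$ from ${\bf T}$ and the generators $A,\da$ are intertwined by $\psi$, we get $\psi(i({\bf T}))=i(\psi({\bf T}))$ for every ${\bf T}$. Concretely, $\psi({\bf T}\curvearrowright)=\psi({\bf T})\curvearrowright$ because the attached cup-cap piece contains no crossings and is fixed by $\psi$. Applying this with ${\bf T}$ replaced by the canonical basis element gives $\psi(i(\mathcal C_{\bf T}))=i(\psi(\mathcal C_{\bf T}))=i(\mathcal C_{\bf T})$, so property~(1) holds.

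For property~(2), I would apply $i$ to the expansion $\mathcal C_{\bf T}={\bf T}+\sum_{\ell({\bf T}')<\ell({\bf T})}y_{{\bf T}',{\bf T}}\,{\bf T}'$. By linearity of $i$ this gives $i(\mathcal C_{\bf T})={\bf T}\curvearrowright+\sum y_{{\bf T}',{\bf T}}\,{\bf T}'\!\curvearrowright$, where each ${\bf T}'\!\curvearrowright\in\mathcal P_{\tilde{\bf a},\bf b}$ and $\ell({\bf T}'\!\curvearrowright)=\ell({\bf T}')<\ell({\bf T})=\ell({\bf T}\curvearrowright)$, and the coefficients still lie in $q^{-1}\Z[q^{-1}]$. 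Thus $i(\mathcal C_{\bf T})$ satisfies exactly the unitriangularity condition~(2) with leading term the standard basis element ${\bf T}\curvearrowright\in\mathcal P_{\tilde{\bf a},\bf b}$. By the uniqueness part of Theorem~\ref{thm:cbhom}, $i(\mathcal C_{\bf T})=\mathcal C_{{\bf T}\curvearrowright}$, the canonical basis element indexed by ${\bf T}\curvearrowright$.

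The main obstacle is the bookkeeping in the first paragraph: one must be careful that ${\bf T}\curvearrowright$, as drawn, is genuinely a \emph{reduced} positive ribbon (no strand crosses itself, no two strands cross twice) and that it indeed lies in $\mathcal P_{\tilde{\bf a},\bf b}$ rather than merely being isotopic to something requiring simplification via \eqref{eq:rel1}--\eqref{eq:rel10}. Since the attached arc joins the two fresh adjacent boundary points and can be taken in a small collar disjoint from ${\bf T}$, it creates no self-crossing and no double crossing with any existing strand, so reducedness is preserved; making this precise (perhaps with a small picture) is the only genuinely geometric point, and everything else is formal once it is in place.
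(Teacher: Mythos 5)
Your proof is correct and follows essentially the same route as the paper's: apply $i$ to the unitriangular expansion of $\mathcal C_{\bf T}$, observe that $i$ sends standard basis elements to standard basis elements and preserves length, check $\psi$-invariance of $i(\mathcal C_{\bf T})$ using $\psi$-invariance of both $\mathcal C_{\bf T}$ and the crossingless cap $\curvearrowright$, and invoke the uniqueness in Theorem~\ref{thm:cbhom}. The only difference is that you spell out more explicitly that $i$ carries positive reduced ribbons to positive reduced ribbons (which the paper leaves implicit), and your parenthetical identification of $\curvearrowright$ as ``$A\da$'' is off --- it should just be the cap $A:\ua\da\to\varnothing$ tensored on the right --- but this does not affect the argument.
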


\begin{proof}
    Suppose we have \[\mathcal C_{{\bf T}}=\mathcal {\bf T}+\sum\limits_{\substack{{\bf T}'\in \Di \\ \ell({\bf T}')<\ell({\bf T})}}y_{{\bf T}', {\bf T}}\mathcal {\bf T}'.\] 
   We see that
    \begin{align*}
        i(\mathcal C_{\bf T})=&{\bf T}\curvearrowright+\sum\limits_{\substack{{\bf T}'\in \Di \\ \ell({\bf T}')<\ell({\bf T})}}y_{{\bf T}', {\bf T}}\mathcal {\bf T}'\curvearrowright\\
       &\in {\bf T}\curvearrowright+\sum\limits_{\substack{{\bf T}''\in \mathcal{P}_{\tilde{\bf a},\bf b} \\ \ell({\bf T}'')<\ell({\bf T}\curvearrowright)}}q^{-1}\Z[q^{-1}] {\bf T}''
    \end{align*}
since $\ell({\bf T}'\curvearrowright)=\ell(\bf{T}')<\ell(\bf T)=\ell({\bf T}\curvearrowright)$ and $y_{{\bf T}', {\bf T}}\in q^{-1}\Z[q^{-1}]$. 

On the other hand, since $\bf T$ and $\curvearrowright$ are both $\psi$-invariant, we see that $i(\mathcal C_{\bf T})$ is $\psi$-invariant as well. Thus by the uniqueness of the canonical basis we see that $i(\mathcal C_{{\bf T}})$ is a canonical basis element of $Hom_{\OSqq}(\tilde{\bf a},\bf b)$.
\end{proof}

\begin{remark}
The embedding $i$ can be easily generalized to incorporate more complicated morphism spaces by adding more non-crossing cups and caps. The canonical bases are always preserved under such an embedding, following an inductive proof similar to the proof of Proposition~\ref{cor:embedding}.
\end{remark}

\section{Quantized walled Brauer algebra}
\label{sec:qWB}
In this section, we review the definition and basic properties of the quantized walled Brauer algebras. Moreover, we realize them as specific morphism spaces of $\OSqq$, allowing us to apply the constructions in Proposition~\ref{prop:bar} and Theorem~\ref{thm:cbhom} to the quantized walled Brauer algebras. Although the results in this section do not extend beyond the earlier constructions, we still find it helpful to express the bar involution and the canonical bases within the algebraic framework.
\vspace{-1em}

\subsection{Basic properties}
Fix $q\in \K^*$, $\nl,\nr \in\N$ and set
\begin{equation}
\label{eq:I}
\I_-=\{-\nl, \ldots,-2, -1\},\quad \I_+=\{1,2,\ldots,\nr\},\quad \I=\I_-\cup \I_+.    
\end{equation}
We often denote $-k$ by $\underline{k}$ for $k\geqslant 1$.

\begin{definition} {\rm \cite{Le94}(also see \cite{En03,RS15})}
\label{def:qWB}
    The quantized walled Brauer algebra $\qWB{\nl}{\nr}{q}{t}$ is generated by $H_i,\ i\in \I$ and $e$ over $\K$ subject to the following relations:
    \begin{equation*}
    \begin{aligned}
        &(1)\ (H_i-q)(H_i+q^{-1})=1,\ i\in \I, \\
        &(2)\ H_iH_j=H_jH_i,\ |i-j|>1, \\
        &(3)\ H_iH_{i+1}H_i=H_{i+1}H_iH_{i+1}\ i,i+1\in\I,\\
        &(4)\ H_ie=eH_i,\ |i|\geqslant 2, \\
        &(5)\ eH_{\pm 1}e=te,\\
        &(6)\ e^2=\frac{t-t^{-1}}{q-q^{-1}}e, \\
        &(7) \ eH^{-1}_{\underline1}H_1eH_{\underline1}=eH^{-1}_{\underline1}H_1eH_1,\qquad (8)\ H_{\underline1}eH^{-1}_{\underline1}H_1e=H_1eH^{-1}_{\underline1}H_1e.
    \end{aligned}
\end{equation*}
\end{definition}

It follows from Definition \ref{def:qWB} that the walled $q$-Brauer algebra $\qWB{\nl}{\nr}{q}{t}$ contains type A Hecke algebras $\HA{\nl}$ and $\HA{\nr}$, which are generated by ${H_{\underline{1}},\ldots,H_{\underline{\nl-1}}}$ and ${H_1,\ldots,H_{\nr-1}}$ respectively. We use the notation $\HA{\underline\nl}$ to refer to the subalgebra $\HA{\nl}$.

For $l,r\in \I_-$ or $l,r\in \I_+$ we let
\begin{align*}
H_{l,r}^{+}=\begin{cases}H_lH_{l+1}\cdots H_{r}& \hbox {if } l\leq r \\H_lH_{l-1}\cdots H_{r}& \hbox {if } l>r\end{cases}
,\quad 
H_{l,r}^{-}=\begin{cases}H_l^{-1}H_{l+1}^{-1}\cdots H_{r}^{-1}& \hbox {if } l\leq r,\\H_l^{-1}H_{l-1}^{-1}\cdots H_{r}^{-1}& \hbox {if } l>r\end{cases}.
\end{align*}

We make the convention that $e_{0}=1$. For each $0\leq k\leq \min\{\nl,\nr\}$, we define the elements $e_{k}$ in $\qWB{\nl}{\nr}{q}{t}$ inductively by
$$e_{1}=e\quad \mathrm{and}\quad e_{k+1}=eH_{\underline 1,\underline k}^-H_{1,k}^+e_k .$$

We collect a few identities in $\qWB{\nl}{\nr}{q}{t}$ which will be used in the sequel.
\begin{lemma}
\label{lemma:ek}
    In $\qWB{\nl}{\nr}{q}{t}$ we have
    \begin{equation}
        \begin{aligned}
        &(1)\ e_{k+1}=e_{k}H_{k,1}^+H_{\underline k,\underline 1}^-e,\\
        &(2)\ 
           e_{k}=e_{k}H_iH^{-1}_{\underline i}=H_{\underline i}^{- 1}H_ie_{k},\ 1\leqslant i<k\leqslant \min\{\nl,\nr\}, \\
        &(3)\ e_{k}e_{i}=e_{i}e_{k}=(\frac{t-t^{-1}}{q-q^{-1}})^{i}e_{k},\ 1\leqslant i\leqslant k\leqslant \min\{\nl,\nr\},\\
            &(4)\  H_ie_{k}=e_{k}H_i,\ |i|\geq k+1, \\
            &(5)\ e_{i}H_je_{k}=e_{k}H_je_{i}=t(\frac{t-t^{-1}}{q-q^{-1}})^{i-1}e_{k},\  1\leqslant |j|\leqslant i\leqslant k\leqslant \min\{\nl,\nr\}.
        \end{aligned}
    \end{equation}
\end{lemma}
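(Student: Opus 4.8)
\textbf{Proof proposal for Lemma~\ref{lemma:ek}.}

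The plan is to prove all five identities by reducing everything to the defining relations of Definition~\ref{def:qWB} together with the inductive definition of $e_k$, using induction on $k$ throughout. The natural order is: first establish (4), then (2), then (1), then (3), and finally (5), since each later identity leans on the earlier ones.

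First I would prove (4), $H_ie_k=e_kH_i$ for $|i|\geqslant k+1$, by induction on $k$. The base case $k=1$ is relation (4) of Definition~\ref{def:qWB}. For the inductive step, write $e_{k+1}=eH_{\underline 1,\underline k}^-H_{1,k}^+e_k$; if $|i|\geqslant k+2$ then $H_i$ commutes with each factor $H_{\underline j}^{-1}$ ($1\leqslant j\leqslant k$) and each $H_j$ ($1\leqslant j\leqslant k$) by relation (2), commutes with the leftmost $e$ by relation (4) of Definition~\ref{def:qWB} (valid since $|i|\geqslant 2$), and commutes with $e_k$ by the inductive hypothesis; combining these gives the claim. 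Next, for (2), $e_k=e_kH_iH_{\underline i}^{-1}=H_{\underline i}^{-1}H_ie_k$ for $1\leqslant i<k$, I would again induct on $k$, with base case $k=2$ being precisely relations (7) and (8) of Definition~\ref{def:qWB} rewritten via $e_2=eH_{\underline1}^{-1}H_1e$ (for $i=1$). For larger $k$, peel off the recursive definition and push $H_iH_{\underline i}^{-1}$ through the factors of $H_{\underline1,\underline k}^-$ and $H_{1,k}^+$; the terms with index $j$ satisfying $|i-j|>1$ commute freely, the adjacent ones produce braid-type rearrangements that are absorbed either by the inductive hypothesis on $e_k$ or by the $k=2$ case after localizing to the relevant two-strand block. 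This is essentially the statement that $e_k$ ``projects away'' the first $k-1$ Hecke generators on both sides in a matched way.

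Identity (1), $e_{k+1}=e_kH_{k,1}^+H_{\underline k,\underline1}^-e$, is the ``reversed'' form of the recursive definition and I would derive it by manipulating $e_{k+1}=eH_{\underline1,\underline k}^-H_{1,k}^+e_k$ using (2) and (4) to slide the $e_k$ to the left past the braid words, reorganizing the indices; alternatively one checks directly that both expressions equal a common normal form. Identity (3), $e_ke_i=e_ie_k=\bigl(\tfrac{t-t^{-1}}{q-q^{-1}}\bigr)^ie_k$ for $i\leqslant k$, follows by induction on $i$: the case $i=1$ uses $e^2=\tfrac{t-t^{-1}}{q-q^{-1}}e$ (relation (6)) combined with (4) to bring the leftmost $e$ of $e_k$'s recursive expansion into contact, and then (2) to absorb the intervening braid words; the inductive step uses $e_{i+1}=eH_{\underline1,\underline i}^-H_{1,i}^+e_i$ and the already-proved $i$ case. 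Finally (5) is the generalization: $e_iH_je_k = t\bigl(\tfrac{t-t^{-1}}{q-q^{-1}}\bigr)^{i-1}e_k$ for $1\leqslant|j|\leqslant i\leqslant k$. Here I would first reduce to the case $|j|=1$ using (2): conjugating by appropriate $H$'s moves any $H_j$ with $|j|\leqslant i$ to $H_{\pm1}$ at the cost of factors that are reabsorbed, so $e_iH_je_k$ collapses to $e_iH_{\pm1}e_k$; then use (3) to write $e_i = \bigl(\tfrac{t-t^{-1}}{q-q^{-1}}\bigr)^{i-1}$-scaled contact with $e_1=e$ on the relevant side, and apply relation (5) of Definition~\ref{def:qWB}, $eH_{\pm1}e=te$.

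The main obstacle will be the bookkeeping in the inductive steps for (2) and (5): tracking how a single Hecke generator commutes or braids past the long words $H_{\underline1,\underline k}^-$ and $H_{1,k}^+$ inside the recursive expansion of $e_k$, and verifying that every ``error term'' produced by a braid relation is genuinely reabsorbed by the inductive hypothesis rather than creating a new obstruction. A clean way to control this is to prove an auxiliary commutation fact first — roughly, that $e_k H_{\underline1,\underline k}^- H_{1,k}^+ = H_{\underline1,\underline k}^- H_{1,k}^+ e_k$ is unnecessary but a weaker ``shift'' identity $e_k H_{i} = e_k H_{\underline i}$-type relation holds — and then feed it uniformly into all the inductions. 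With that lemma in hand, each of (1)--(5) becomes a short formal manipulation.
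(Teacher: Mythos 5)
Your overall strategy — reduce each identity to Definition~\ref{def:qWB} via induction, bootstrapping from earlier parts of the lemma — is the right one and matches the paper in spirit, and your treatments of (3), (4), (5) track the paper's closely. However, the dependency order you propose, namely proving (2) before (1), has a real gap. In the paper (1) is proved first, by directly expanding the recursion $e_{k+1}=eH_{\underline1,\underline k}^-H_{1,k}^+e_k$ all the way down to $e$ and regrouping the factors using only far-commutativity; no part of (2) is needed. Conversely, the paper's proof of (2) relies essentially on (1): for the inductive step with $i<k$, it rewrites $e_{k+1}=e_kH_{k,1}^+H_{\underline k,\underline1}^-e$, so that $H_i$ lands directly on $e_k$ and the inductive hypothesis $H_ie_k=H_{\underline i}e_k$ applies immediately. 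If instead you work from the original recursion $e_{k+1}=eH_{\underline1,\underline k}^-H_{1,k}^+e_k$, you must carry $H_i$ across the leading $e$ and the braid words before reaching $e_k$, and for $i=1$ this fails at the first step because $H_{\pm1}$ does not commute with $e$ (Definition~\ref{def:qWB}(4) requires $|i|\geq 2$). Your phrase ``the adjacent ones produce braid-type rearrangements that are absorbed ... by the $k=2$ case after localizing'' glosses over exactly this obstruction; a careful expansion shows the error terms (e.g. $H_{\underline2}H_{\underline1}^{-1}$ versus $H_{\underline1}^{-1}H_{\underline2}^{-1}$ blocks) do not vanish without invoking the reversed recursion. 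Your fallback for (1), ``one checks directly that both expressions equal a common normal form,'' is in fact the paper's method — so the correct fix is simply to take that route and prove (1) first, independently of (2), as the paper does. With the order (4), (1), (2), (3), (5), the rest of your sketch goes through.

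One further note: for the $i=k$ case of (2) the paper also works from the original recursion (moving $H_k$ past $H_{1,k-1}^+$ by a braid move, applying the inductive hypothesis to turn $H_{k-1}e_k$ into $H_{\underline{k-1}}e_k$, then braiding back out $H_{\underline k}$), so that part of your plan is consistent with the paper; the problematic case is exactly $i<k$.
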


\begin{proof}
    Relation $(4)$ follows from an easy induction on $k$.

    For $(1)$, when $k=1$ the equality trivially holds. For general $k$ we see that
    \begin{align*}
        e_{k+1}=&eH_{-1}^{-1}\cdots H_{-k}^{-1}H_1\cdots H_k e_{k} \\
        =&(eH_{-1}^{-1}\cdots H_{-k}^{-1}H_1\cdots H_k) (e H_{-1}^{-1}\cdots H_{\underline{k-1}}^{-1}H_1\cdots H_{k-1})\cdots (eH_{-1}^{-1}H_1)e \\
        =&(eH_{-1}^{-1}\cdots H_{\underline{k-1}}^{-1}H_1\cdots H_{k-1}) (e H_{-1}^{-1}\cdots H^{-1}_{-k+2}H_{-k,-k+1}^-H_1\cdots H_{k-2} H_{k,k-1}^+)\cdots \\
        =&e_{k}H_{k,1}^+H_{\underline k,\underline 1}^-e.
    \end{align*}
    
    For $(2)$, from relations $(7)$ and $(8)$ in Definition \ref{def:qWB} we know that $e_{2}=H_{-1}^{-1}H_1e_{2}=e_{2}H_{-1}^{-1}H_1$. Now we use induction on $k$. Suppose the equation holds for $k$. Then by $(1)$ we have
    \begin{align*}
        H_ie_{k+1}=&H_ie_{k}H_{k,1}^+H_{\underline k,\underline 1}^-e.
    \end{align*}
    If $i<k$, then the equality $H_{\underline i}e_{k+1}=H_ie_{k+1}$ holds by the inductive assumption.
    If $i=k$, we have
    \begin{align*}
        H_{k}e_{k+1}=&H_{k}eH_{1,k}^+H_{\underline 1,\underline k}^-e_{k}
        =eH_1\cdots H_{k-2}(H_kH_{k-1}H_{k})H_{\underline 1,\underline k}^-e_{k}\\
        =&eH_1\cdots H_{k-2}(H_{k-1}H_{k}H_{k-1})H_{\underline 1,\underline k}^-e_{k}
        =eH_{1,k}^+H_{\underline 1,\underline k}^- H_{k-1}e_{k}\\
        =&eH_{1,k}^+H_{\underline 1,\underline k}^- H_{\underline{k-1}}e_{k}=eH_{1,k}^+ H_{\underline 1}^{-1}\cdots H_{\underline{k-2}}^{-1}(H_{\underline{k-1}}^{-1}H_{\underline k}^{-1}H_{\underline{k-1}}^{-1})e_{k}\\
        =&eH_{1,k}^+H_{\underline 1,\underline k}^- H_{\underline{k-1}}e_{k}=eH_{1,k}^+ H_{\underline 1}^{-1}\cdots H_{\underline{k-2}}^{-1}(H_{\underline k}^{-1}H_{\underline{k-1}}^{-1}H_{\underline k}^{-1})e_{k}\\
        =&H_{\underline k}eH_{1,k}^+H_{\underline 1,\underline k}^-e_{k}=H_{\underline k}e_{k+1}.
    \end{align*}
    The other equality $e_{k+1}H_i=e_{k+1}H_{-i}$ can be proved similarly.

    For $(3)$, we use induction on $i$. When $i=1$, clearly we have $ee_{k}=e_{k}e=\frac{t-t^{-1}}{q-q^{-1}}e_{k}$. Suppose the equality holds for $j<k$. Then by $(1)$ we have
    \begin{align*}
        e_{j+1}e_{k}=&e_{j}H_{j,1}^+H_{\underline j,\underline 1}^-e e_{k}
        =\frac{t-t^{-1}}{q-q^{-1}}e_{j}H_{j,1}^+H_{\underline j,\underline 1}^-e_{k}\\
        \overset{(2)}{=}&\frac{t-t^{-1}}{q-q^{-1}}e_{j}e_{k}=(\frac{t-t^{-1}}{q-q^{-1}})^{j+1}e_{k}.    \end{align*}

    For $(5)$, when $i=1$, we see that
    \begin{align*}
        eH_{\underline1}e_{k}=eH_{\underline1}eH_{\underline1,\underline{k-1}}^{-}H_{1,k-1}^{+}e_{k-1}=teH_{\underline1,\underline{k-1}}^{-}H_{1,k-1}^{+}e_{k-1}=te_{k}.
    \end{align*}
    and similarly $e_{k}H_{-1}e=te_{k}$.
    Now suppose both qualities in $(5)$ holds for $i$, then for $|j|\leqslant i+1\leqslant k$ we have
    \begin{align*}
        e_{i+1}H_je_{k}=&e H_{\underline 1,\underline i}^-H_{1,i}^+e_{i}H_je_{k}.
    \end{align*}
    If $|j|<i+1$, then the equality holds by the inductive assumption and $(4)$. If $|j|=i+1$, take $j=-i-1$ for example we have
    \begin{align*}
        &e_{i+1}H_{\underline {i+1}}e_{k}\\
        =&e_{i}H_{\underline i,\underline 1}^-H_{i,1}^+ eH_{\underline {i+1}}e_{k}=\frac{t-t^{-1}}{q-q^{-1}}e_{i}H_{\underline i,\underline 1}^-H_{i,1}^+H_{\underline {i+1}}e_{k}\\
        =&\frac{t-t^{-1}}{q-q^{-1}}e_{i}H_{\underline i,\underline 1}^-H_{\underline {i+1}}H_{i,1}^+e_{k}\overset{(2)}{=}\frac{t-t^{-1}}{q-q^{-1}}e_{i}H_{\underline i,\underline 1}^-H_{\underline {i+1}}H_{\underline 1,\underline i}^+e_{k}\\
        =&\frac{t-t^{-1}}{q-q^{-1}}e_{i}H_{\underline i}^{-1}H_{\underline {i+1}}H_{\underline i}e_{k}=\frac{t-t^{-1}}{q-q^{-1}}e_{i}(H_{\underline i}+(q^{-1}-q))H_{\underline {i+1}}H_{\underline i}e_{k}.
    \end{align*}
    Moreover, by the inductive assumption, we see that\[e_{i}H_{\underline {i+1}}H_{\underline i}e_{k}=H_{\underline {i+1}}e_{i}H_{\underline i}e_{k}=t(\frac{t-t^{-1}}{q-q^{-1}})^{i-1}H_{\underline {i+1}}e_{k},\]
    while
    \begin{align*}
&e_{i}H_{\underline i}H_{\underline {i+1}}H_{\underline i}e_{k}=e_{i}H_{\underline {i+1}}H_{\underline i}H_{\underline {i+1}}eH_{\underline 1,\underline {k-1}}^-H_{1,k-1}^+e_{k-1}\\
        =&H_{\underline {i+1}}(e_{i}H_{\underline i}e)H_{\underline {i+1}}H_{\underline 1,\underline {k-1}}^-H_{1,k-1}^+e_{k-1}\\
        =&\frac{q-q^{-1}}{t-t^{-1}}H_{\underline {i+1}}(e_{i}H_{\underline i}e)e H_{\underline {i+1}}H_{\underline 1,\underline {k-1}}^-H_{1,k-1}^+e_{k-1}\\ 
        =&t\frac{q-q^{-1}}{t-t^{-1}}H_{\underline {i+1}}^2e_{i}e_{k}
        =t(\frac{t-t^{-1}}{q-q^{-1}})^{i-1}(1+(q-q^{-1})H_{\underline {i+1}})e_{k}.
    \end{align*}
    Thus we conclude that $e_{i+1}H_{\underline {i+1}}e_{k}=t(\frac{t-t^{-1}}{q-q^{-1}})^{i}e_{k}$.
    The case $j=i+1$ can be proved similarly.
\end{proof}

\begin{remark}
    Applying the relations in Lemma \ref{lemma:ek}, we observe that the idempotents $e_{k}$ here are equal to the idempotents $e^k$ defined in \cite[Lemma 2.8]{RS15}.
\end{remark}

\subsection{A bar involution}
 We label the top row and the bottom row of any $(\ua^\nl\da^\nr,\ua^\nl\da^\nr)$-ribbon by $\I$ in increasing order from left to right. 

\begin{proposition}\cite{Tur90}
\label{prop:qWBhom}
    There is an algebra isomorphism \[\Xi:\qWB{\nl}{\nr}{q}{t}\to End_{\OSqq}(\ua^m\da^n)\] given by (unessential labels are omitted)
\begin{equation*}
\begin{aligned}
      &\begin{tikzpicture}[blue,
thick,decoration={
    markings,
    mark=at position 1 with {\arrow{>}}}
    ] 
    \node at (-0.5,0) {$\Xi(H_{\underline{i}})=$};
    \draw[postaction={decorate}]  (0.65,-0.5)--(0.65,0.5);
    \node at (1,0) {$\cdots$};
    \draw (2.5,-0.5)--(2.1,-0.1);
    \draw[postaction={decorate}](1.9,0.1)--(1.5,0.5);
    \draw[postaction={decorate}] (1.5,-0.5)--(2.5,0.5);
    \node at (2.75,0) {$\cdots$};
    \draw[postaction={decorate}]  (3.25,-0.5)--(3.25,0.5);
     \draw[postaction={decorate}]  (3.55,0.5)--(3.55,-0.5);
     \node at (2.5,-0.75) {$\underline{i}$};
     \node at (1.5,-0.75) {$\underline{i+1}$};
     \node at (2.5,0.75) {$\underline{i}$};
     \node at (1.5,0.75) {$\underline{i+1}$};
     \node at (3.9,0) {$\cdots$};
     \draw[postaction={decorate}]  (4.25,0.5)--(4.25,-0.5);
  \end{tikzpicture},\quad
      \begin{tikzpicture}[blue,
thick,decoration={
    markings,
    mark=at position 1 with {\arrow{>}}}
    ] 
    \node at (-0.5,0) {$\Xi(H_{j})=$};
    \draw[postaction={decorate}]  (0.65,-0.5)--(0.65,0.5);
    \node at (1,0) {$\cdots$};
    \draw[postaction={decorate}]  (1.35,-0.5)--(1.35,0.5);
      \draw[postaction={decorate}]  (1.55,0.5)--(1.55,-0.5);
      \node at (2,0) {$\cdots$};
    \draw (2.2,0.5)--(2.6,0.1);
    \draw[postaction={decorate}](2.8,-0.1)--(3.2,-0.5);
    \draw[postaction={decorate}] (3.2,0.5)--(2.2,-0.5);
    \node at (2.2,-0.75) {$j$};
    \node at (3.2,-0.75) {$j+1$};
      \node at (2.2,0.75) {$j$};
    \node at (3.2,0.75) {$j+1$};
     \node at (3.6,0) {$\cdots$};
     \draw[postaction={decorate}]  (3.95,0.5)--(3.95,-0.5);
  \end{tikzpicture},\\
      &\begin{tikzpicture}[blue,
thick,decoration={
    markings,
    mark=at position 1 with {\arrow{>}}}
    ] 
    \node at (-0.5,0) {$\Xi(e)=$};
    \draw[postaction={decorate}]  (0.65,-0.5)--(0.65,0.5);
    \node at (1,0) {$\cdots$};
    \draw[postaction={decorate}]  (1.35,-0.5)--(1.35,0.5);
    \draw[postaction={decorate}](2.8,0.5) to [bend left](1.8,0.5);
    \draw[postaction={decorate}] (1.8,-0.5) to [bend left](2.8,-0.5);
     \draw[postaction={decorate}]  (3.25,0.5)--(3.25,-0.5);
      \node at (2.8,0.75) {$1$};
      \node at (1.8,0.75) {$\underline{1}$};
       \node at (2.8,-0.75) {$1$};
      \node at (1.8,-0.75) {$\underline{1}$};
     \node at (3.6,0) {$\cdots$};
     \draw[postaction={decorate}]  (3.95,0.5)--(3.95,-0.5);
  \end{tikzpicture}.
\end{aligned}
\end{equation*}
\end{proposition}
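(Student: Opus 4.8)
The plan is to check that $\Xi$ is a well-defined algebra homomorphism, then that it is surjective, and finally to deduce from a rank comparison that it is an isomorphism.

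\emph{Well-definedness.} First I would express the three diagrams $\Xi(e)$, $\Xi(H_{j})$ and $\Xi(H_{\underline i})$ through the monoidal generators of \S\ref{sec:def}: $\Xi(e)$ is built from a single cup and a single cap joining the right-most $\ua$ of $\ua^m$ to the left-most $\da$ of $\da^n$; $\Xi(H_{j})$ is a single $X_+$ placed in the $\da^n$ block; and $\Xi(H_{\underline i})$ is the $\ua\ua$-crossing obtained from $X_+$ by rotating its two strands, placed in the $\ua^m$ block. Then I would check that these morphisms obey the eight defining relations of Definition~\ref{def:qWB}. Relation~(1) is the quadratic relation for a single crossing, a consequence of the Conway relation~\eqref{eq:X-} together with $X_\pm\circ X_\mp=\da\da$; relations~(2) and~(4) hold because the tangles involved occupy disjoint vertical strips and are interchanged by planar isotopy; relation~(3) is the oriented Reidemeister~III move for three parallel strands; relation~(6) is the free-loop relation~(L) (equivalently~\eqref{eq:rel10}), which replaces the closed component of $\Xi(e)^2$ by the scalar $\frac{t-t^{-1}}{q-q^{-1}}$; relation~(5) is the twisting relation~(T), absorbing the middle crossing of $\Xi(e H_{\pm 1} e)$ into a curl that evaluates to $t^{\pm 1}$; and relations~(7) and~(8) follow from the sliding relation~\eqref{eq:Z} together with~\eqref{eq:X-}. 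This shows that $\Xi$ extends to an algebra homomorphism.

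\emph{Surjectivity.} By Proposition~\ref{prop:basis}, $End_{\OSqq}(\ua^m\da^n)$ is free over $\K$ with basis the set $\mathcal{P}_{\ua^m\da^n,\ua^m\da^n}$ of positive $(\ua^m\da^n,\ua^m\da^n)$-ribbons, so it is enough to express every positive ribbon as a word in the $\Xi(H_i)^{\pm 1}$ and $\Xi(e)$. A combing argument does this: after an isotopy a positive ribbon may be brought into a form with all its cups in a lower band, all its caps in an upper band, and the remaining crossings in between; a cup and a cap can occur only joined across the $\ua/\da$ divide, and after sliding their endpoints inward through braidings such a cup--cap pair becomes a conjugate, by a product of the $\Xi(H_i)^{\pm 1}$, of $\Xi(e)$, while the braidings are themselves words in the $\Xi(H_i)^{\pm 1}$; iterating over all such pairs recovers the idempotents $e_k$ of Lemma~\ref{lemma:ek}. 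Hence $\Xi$ is surjective.

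\emph{Isomorphism and the main obstacle.} Both $\qWB{\nl}{\nr}{q}{t}$ and $End_{\OSqq}(\ua^m\da^n)$ are free $\K$-modules whose ranks equal the number of $(\ua^m\da^n,\ua^m\da^n)$-matchings: for the first this is the content of the tangle basis of~\cite{DDS14} (or the cellular basis of~\cite{En03}), and for the second it is Proposition~\ref{prop:basis}. Since $\K$ is a field and $\Xi$ is a surjective $\K$-linear map between $\K$-spaces of equal finite dimension, it is injective, hence an isomorphism. I expect essentially all of the work to sit in the first step, and within it the delicate relations to be~(5),~(7) and~(8): each of these mixes a crossing with the cup--cap of $e$, so one must keep careful track of which crossings become $X_+$ and which become $X_-$ once the $\ua$-strands are rotated into the diagram, and then reduce the resulting tangle using~(T),~\eqref{eq:X-} and~\eqref{eq:Z}. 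The commutation and braid relations~(2),~(3),~(4) are formal consequences of Reidemeister moves and disjoint-support isotopies, and the surjectivity and injectivity steps rest on the basis theorems already cited.
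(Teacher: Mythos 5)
The paper gives no proof of this proposition; it is stated as a citation to Turaev~\cite{Tur90}, so there is nothing of the paper's own to compare against line by line. Your outline --- check the relations, prove surjectivity by combing, then compare dimensions --- is the standard way to establish such a diagrammatic presentation, and the structure is sound. Two caveats are worth flagging. First, in your treatment of relation~(5) you write that the curl in $\Xi(eH_{\pm 1}e)$ ``evaluates to $t^{\pm 1}$''; but relation~(5) in Definition~\ref{def:qWB} asserts $eH_{\pm1}e = te$, i.e.\ the answer is $t$ (not $t^{-1}$) in both cases, so if you literally mean that the curl coming from $H_{\underline 1}$ is a negative one you would be contradicting the relation you are trying to verify. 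One must check that after the orientation flip from $\da\da$ to $\ua\ua$ the crossing that $\Xi(H_{\underline 1})$ contributes still produces a positive curl --- this is precisely the sort of sign-chasing you rightly identify as the delicate point, but the sentence as written is at best ambiguous. Second, the dimension count for the abstractly presented algebra $\qWB{\nl}{\nr}{q}{t}$ is itself a nontrivial fact; you are importing it from~\cite{DDS14} (or~\cite{En03}), which is legitimate, but notice that DDS establish their basis theorem by an argument quite close in spirit to the surjectivity-plus-spanning step here, so the logic is not much shorter than proving freeness directly. These are points to tighten rather than gaps that sink the argument; the relations~(1)--(4), (6) are indeed immediate from planar isotopy, Reidemeister moves, the quadratic skein relation~\eqref{eq:X-}, and the loop relation~\eqref{eq:rel10}, and the remaining relations~(5), (7), (8) reduce, as you say, to the curl relation and the sliding relation~\eqref{eq:Z}, provided one tracks crossing signs carefully.
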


It follows from Theorem~\ref{thm:cbhom} and Proposition~\ref{prop:qWBhom} that the quantized walled Brauer algebra admits both a bar-involution and a Kazhdan-Lusztig type basis. The following lemma decodes the information of the bar involution $\psi$ in Proposition~\ref{prop:bar} on the algebra level.

\begin{lemma}\label{lemma:bar}
There is a unique involutive homomorphism $\overline{\ \cdot\ }$ on $\qWB{\nl}{\nr}{q}{t}$ which is $\Q$-linear and satisfies $\overline q=q^{-1}, \overline {t}=t^{-1}, \overline{H_{i}}=H_{i}^{-1}$ and $\overline{e}=e$.
\end{lemma}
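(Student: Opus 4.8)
The plan is to construct $\overline{\ \cdot\ }$ directly from the presentation in Definition~\ref{def:qWB}. Since $\qWB{\nl}{\nr}{q}{t}$ is generated over $\K$ by $e$ and the $H_i$, and since the quadratic relation (1) makes $H_i^{-1}=H_i-(q-q^{-1})$ available inside the algebra, there is at most one $\Q$-linear ring endomorphism with the prescribed values on $q,t,H_i,e$; this gives uniqueness. For existence I would check that the assignment $q\mapsto q^{-1}$, $t\mapsto t^{-1}$, $H_i\mapsto H_i^{-1}$, $e\mapsto e$ respects the images of relations (1)--(8), so that the endomorphism is well defined. Involutivity is then automatic, since the square of the map fixes every generator and every scalar.

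First I would dispose of the routine relations. Relations (1)--(3) are the type $A$ Hecke relations on $\HA{\nl}$ and $\HA{\nr}$, and the classical bar involution of the Hecke algebra \cite{KL79} already shows that $H_i\mapsto H_i^{-1}$, $q\mapsto q^{-1}$ preserves them (take inverses in the braid relations, and rewrite the quadratic relation with $q$ replaced by $q^{-1}$). Relation (4) is preserved by conjugating $H_ie=eH_i$ by $H_i^{-1}$. Relation (6) is sent to itself, as the scalar $(t-t^{-1})/(q-q^{-1})$ is bar-invariant. For relation (5) I would expand $e\,H_{\pm1}^{-1}\,e=e(H_{\pm1}-(q-q^{-1}))e=eH_{\pm1}e-(q-q^{-1})e^2$ and feed in (5) and (6) to reach $te-(t-t^{-1})e=t^{-1}e$, which is exactly the image of (5).

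The one step I expect to require care is relations (7) and (8). Writing $X=eH_{\underline1}^{-1}H_1e$, these relations say precisely that $X(H_{\underline1}-H_1)=0$ and $(H_{\underline1}-H_1)X=0$. Since $\overline{\ \cdot\ }$ is a homomorphism and $H_{\underline1}^{-1}-H_1^{-1}=H_{\underline1}-H_1$, their images amount to $X'(H_{\underline1}-H_1)=0$ and $(H_{\underline1}-H_1)X'=0$, where $X'=eH_{\underline1}H_1^{-1}e$. The crux is the identity $X=X'$: expanding $H_{\underline1}^{-1}H_1-H_{\underline1}H_1^{-1}=(q-q^{-1})(H_{\underline1}-H_1)$ gives $X-X'=(q-q^{-1})(eH_{\underline1}e-eH_1e)$, which vanishes by relation (5). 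Once $X=X'$ is in hand, the images of (7) and (8) are literally (7) and (8). I regard this as the only genuine obstacle, and a mild one.

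As a consistency check, one can bypass the relation verification altogether: by Propositions~\ref{prop:bar} and~\ref{prop:qWBhom} the conjugate $\Xi^{-1}\circ\psi\circ\Xi$ of the diagrammatic bar involution is a candidate for $\overline{\ \cdot\ }$, and it has the stated effect on generators because $\Xi(e)$ has no crossing, hence is $\psi$-fixed, while $\psi$ replaces the single positive crossing in $\Xi(H_i)$ by a negative one, which by the skein relation \eqref{eq:X-} equals $\Xi(H_i)-(q-q^{-1})\cdot 1=\Xi(H_i^{-1})$. I would nonetheless present the algebraic argument above, since the lemma is meant to describe the bar map intrinsically on $\qWB{\nl}{\nr}{q}{t}$.
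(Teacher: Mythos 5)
Your proof is correct and follows essentially the same route as the paper's: both reduce everything to verifying relations (7)--(8), and both do so by showing that $e_2=eH_{\underline 1}^{-1}H_1e=eH_{\underline 1}H_1^{-1}e$ is bar-invariant (your $X=X'$), using relation (5). Your packaging of (7),(8) as $X(H_{\underline 1}-H_1)=0$, $(H_{\underline 1}-H_1)X=0$ together with the observation that $H_{\underline 1}-H_1$ is bar-fixed is a modest streamlining of the paper's argument, which instead rewrites (7) as $e_2H_1H_{\underline 1}^{-1}=e_2$ and invokes Lemma~\ref{lemma:ek}(2); the diagrammatic bypass via $\Xi^{-1}\circ\psi\circ\Xi$ you mention at the end is indeed the cleanest existence proof and is implicit in the paper's remark preceding this subsection.
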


\begin{proof}
    It suffices to check that  $\overline{\ \cdot\ }$ preserves the defining relations in Definition \ref{def:qWB}. It is straightforward to check that $\overline{\ \cdot\ }$ preserves the relations (1)--(6). 
    
    To check $\overline{\ \cdot\ }$ preserves the relation (7), we first show that $e_{2}=eH_1H_{\underline1}^{-1}e$ is bar invariant. On one hand, we have $e_{2}=eH_1e+(q^{-1}-q)eH_1H_{\underline1}e=ze+(q^{-1}-q)eH_1H_{\underline1}e$. On the other hand, 
    \[\overline{e_{2}}=eH_{\underline1}H_1^{-1}e=eH_{\underline1}e+(q^{-1}-q)eH_1H_{\underline1}e=ze+(q^{-1}-q)eH_1H_{\underline1}e.\]
    Thus we see that $\overline{e_{2}}=e_{2}$.
    Now relation (7) in Definition \ref{def:qWB} is equivalent to 
    \[e_{2}H_1H_{\underline1}^{-1}=e_{2}.\]
    Apply $\overline{\ \cdot\ }$ on the left hand side and apply relations in Lemma \ref{lemma:ek} we see that
    \[\overline{e_{2}H_1H_{\underline1}^{-1}}=e_{2}H^{-1}_1H_{\underline1}=e_{2}=\overline{e_{2}}.\]
    Thus the relation (7) is preserved by $\overline{\ \cdot\ }$. Similar proof works for checking  the relation (8).
\end{proof}

In the proof of Lemma \ref{lemma:bar}, we already see that $e_{2}$ is bar invariant. In fact, $e_k$ corresponds to the following ribbon:
\begin{equation*}
    \begin{tikzpicture}[blue,scale=1,thick,decoration={
    markings,
    mark=at position 1 with {\arrow{>}}}]
\node (1) at (0.2,1){$\cdots$};
\node (x1) [circle,scale=0.4,label={$-k-1$}] at (1,2){};
\node (x2) [circle,scale=0.4,label=below:{$-k-1$}] at (1,0){};
\node (x3) [circle,scale=0.4,label={$-\nl$}] at (-0.6,2){};
\node (x4) [circle,scale=0.4,label=below:{$-\nl$}] at (-0.6,0){};
\node (2) [circle,scale=0.4,label={$-k$}] at (2,2){};
\node (2.5) at (2.5,2){$\cdots$};
\node (3.5) at (2.5,0){$\cdots$};
\node (3) [circle,scale=0.4,label={$-2$}] at (3,2){};
\node (4) [circle,scale=0.4,label={$-1$}] at (4,2){};
\node (5) [circle,scale=0.4,label={$1$}] at (5,2){};
\node (6) [circle,scale=0.4,label={$2$}] at (6,2){};
\node (7) [circle,scale=0.4,label={${k}$}] at (7,2){};
\node (10) [circle,scale=0.4,label=below:{$-k$}] at (2,0){};
\node (11) [circle,scale=0.4,label=below:{$-2$}] at (3,0){};
\node (12) [circle,scale=0.4,label=below:{$-1$}] at (4,0){};
\node (13) [circle,scale=0.4,label=below:{$1$}] at (5,0){};
\node (14) [circle,scale=0.4,label=below:{$2$}] at (6,0){};
\node (15) [circle,scale=0.4,label=below:{${k}$}] at (7,0){};
\node (17) at (6.5,2){$\cdots$};
\node (17.5) at (6.5,0){$\cdots$};
\node (y1)[circle,scale=0.4,label={$k+1$}] at (8,2){};
\node (y2)[circle,scale=0.4,label=below:{$k+1$}] at (8,0){};
\node (y3)[circle,scale=0.4,label={$\nr$}] at (9.6,2){};
\node (y4)[circle,scale=0.4,label=below:{$\nr$}] at (9.6,0){};
\node (8) at (8.8,1){$\cdots$};


\path 
          (2) edge[bend right] (7)
          (3) edge[bend right] (6)
          (4) edge[bend right] (5)
          (10) edge[bend left] (15)
          (11) edge[bend left] (14)
          (12) edge[bend left] (13)
          (x1) edge (x2)
          (y1) edge (y2)
          (x3) edge (x4)
          (y3) edge (y4);
    \draw[postaction={decorate}] (-0.6,1.5)--(-0.6,2);
    \draw[postaction={decorate}] (1,1.5)--(1,2);
    \draw[postaction={decorate}] (8,0.5)--(8,0);
    \draw[postaction={decorate}] (9.6,0.5)--(9.6,0);
    \draw[postaction={decorate}] (2.01,1.99)--(2,2);
     \draw[postaction={decorate}] (3.01,1.99)--(3,2);
      \draw[postaction={decorate}] (4.01,1.99)--(4,2);
     \draw[postaction={decorate}] (6.99,0.01)--(7,0);
      \draw[postaction={decorate}] (5.99,0.01)--(6,0);
       \draw[postaction={decorate}] (4.99,0.01)--(5,0);
\end{tikzpicture}
\end{equation*}
which is $\psi$-invariant following from the Proposition~\ref{prop:bar}. Here we also give an algebraic proof of this observation.
\begin{lemma}
     We have $\overline{e_{k}}=e_{k}$ for $0\leq k\leq \min\{\nl,\nr\}$.
\end{lemma}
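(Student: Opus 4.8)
The plan is to argue by induction on $k$, using the recursion $e_{k+1}=eH_{\underline 1,\underline k}^-H_{1,k}^+e_k$ together with the identities collected in Lemma~\ref{lemma:ek} and the alternative recursion $e_{k+1}=e_kH_{k,1}^+H_{\underline k,\underline 1}^-e$ from Lemma~\ref{lemma:ek}(1). The base cases $k=0$ (where $e_0=1$) and $k=1$ (where $e_1=e$ and $\overline e=e$) are immediate, and $k=2$ was already handled inside the proof of Lemma~\ref{lemma:bar}. So I would assume $\overline{e_k}=e_k$ and prove $\overline{e_{k+1}}=e_{k+1}$.

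First I would apply $\overline{\ \cdot\ }$ to $e_{k+1}=eH_{\underline 1,\underline k}^-H_{1,k}^+e_k$. Using $\overline e=e$, $\overline{H_i}=H_i^{-1}$, and the inductive hypothesis $\overline{e_k}=e_k$, this gives $\overline{e_{k+1}}=eH_{\underline 1,\underline k}^+H_{1,k}^-e_k$, where the ordering of indices inside the products gets reversed but, since $\overline{\ \cdot\ }$ is an anti-automorphism would cause trouble — here I should note $\overline{\ \cdot\ }$ is a (covariant) \emph{homomorphism}, not an anti-homomorphism, so in fact $\overline{H_{\underline 1,\underline k}^-}=H_{\underline 1,\underline k}^+$ and $\overline{H_{1,k}^+}=H_{1,k}^-$ with the \emph{same} index ordering. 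So $\overline{e_{k+1}}=eH_{\underline 1,\underline k}^+H_{1,k}^-e_k$. The task is then to show this equals $eH_{\underline 1,\underline k}^-H_{1,k}^+e_k$. The strategy is to convert each $H_{\underline j}$ (with $1\le j\le k$) appearing on the left of $e_k$ into $H_{\underline j}^{-1}$, and symmetrically each $H_j^{-1}$ into $H_j$, by absorbing the difference $H_{\underline j}^{\pm 1}-H_{\underline j}^{\mp 1}=\pm(q-q^{-1})$ into $e$ and using Lemma~\ref{lemma:ek}(2), which says $e_kH_iH_{\underline i}^{-1}=e_k=H_{\underline i}^{-1}H_ie_k$ for $1\le i<k$, i.e. $H_i$ and $H_{\underline i}$ act ``the same way'' on $e_k$ up to inversion. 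Iterating this rewriting — peeling off one Hecke generator at a time, as in the $k=2$ computation in Lemma~\ref{lemma:bar} — should let me move between $H_{l,r}^+$ and $H_{l,r}^-$ at the cost of lower terms that vanish against $e_k$ by parts (3) and (5) of Lemma~\ref{lemma:ek}.

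Alternatively, and perhaps more cleanly, I would use Lemma~\ref{lemma:ek}(2) directly to rewrite $e_{k+1}$ itself in a manifestly bar-symmetric form. Concretely, $e_{k+1}=eH_{\underline 1,\underline k}^-H_{1,k}^+e_k$, and by repeatedly applying $e_k=H_{\underline i}^{-1}H_ie_k$ for $i=1,\dots,k-1$ one can replace the tail of $H_{1,k}^+$ acting on $e_k$, eventually expressing $e_{k+1}$ in terms of $e$, a symmetric combination of $H_i$'s and $H_{\underline i}$'s, and $e_k$, on which $\overline{\ \cdot\ }$ visibly acts as the identity once the inductive hypothesis is invoked. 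Throughout I would use Lemma~\ref{lemma:ek}(5), $e_iH_je_k=t(\tfrac{t-t^{-1}}{q-q^{-1}})^{i-1}e_k$, to kill the spurious lower-order terms, exactly as the $t$-factor absorbed the discrepancy in the $k=2$ case.

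The main obstacle I anticipate is bookkeeping: the products $H_{\underline 1,\underline k}^-$ and $H_{1,k}^+$ involve $k$ generators each, and the braid and commutation relations (2)--(3) of Definition~\ref{def:qWB} must be applied carefully to slide generators past one another in the right order — much as in the long computation of Lemma~\ref{lemma:ek}(2) for $i=k$. The conceptual content is light (it is forced by Proposition~\ref{prop:bar} and the diagrammatic picture of $e_k$ given above, which is patently symmetric under swapping crossings since $e_k$ has none), but writing the algebraic induction so that every intermediate term is accounted for will require care. I expect the cleanest route is to prove an auxiliary identity of the shape $eH_{\underline 1,\underline k}^+H_{1,k}^-e_k=eH_{\underline 1,\underline k}^-H_{1,k}^+e_k$ first, then deduce $\overline{e_{k+1}}=e_{k+1}$ in one line.
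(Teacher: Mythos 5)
Your plan has the right skeleton — induction on $k$, base cases $k=0,1,2$ with $k=2$ borrowed from the proof of Lemma~\ref{lemma:bar}, applying $\overline{\ \cdot\ }$ to a recursive formula for $e_{k+1}$, and using the identities of Lemma~\ref{lemma:ek} to close the gap — and you are right that the bar map is a genuine homomorphism so $\overline{H_{\underline 1,\underline k}^-}=H_{\underline 1,\underline k}^+$, $\overline{H_{1,k}^+}=H_{1,k}^-$ with no order reversal, giving $\overline{e_{k+1}}=eH_{\underline 1,\underline k}^+H_{1,k}^-e_k$. But the proposal stops short of the step that makes the induction actually close. You reduce to the auxiliary identity $eH_{\underline 1,\underline k}^+H_{1,k}^-e_k=eH_{\underline 1,\underline k}^-H_{1,k}^+e_k$ and propose to establish it by ``peeling off one Hecke generator at a time'' using Lemma~\ref{lemma:ek}(2); the obstruction you yourself flag (bookkeeping) is real and not merely cosmetic: Lemma~\ref{lemma:ek}(2) only lets $H_i$ and $H_{\underline i}$ trade places against $e_k$ for $i<k$, so the outermost generators $H_k,H_{\underline k}$ — precisely the ones carrying the nontrivial content — cannot be processed by that mechanism, and the products $H_{\underline 1,\underline k}^{\pm}$, $H_{1,k}^{\mp}$ build outward from index $1$, so there is no direct way to peel them from either end.

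The missing idea that collapses all the bookkeeping is to first rewrite $e_{k+1}$ in a form where only the top-index pair survives. Using Lemma~\ref{lemma:ek}(3) to insert $e_{k-1}e_k=(\tfrac{t-t^{-1}}{q-q^{-1}})^{k-1}e_k$, then Lemma~\ref{lemma:ek}(4) to slide $H_{\underline k}^{-1}H_k$ past $e_{k-1}$ and absorb $eH_{\underline 1,\underline{k-1}}^-H_{1,k-1}^+e_{k-1}$ back into $e_k$ by definition, one obtains
\[
e_{k+1}=\Bigl(\tfrac{q-q^{-1}}{t-t^{-1}}\Bigr)^{k-1}\,e_kH_{\underline k}^{-1}H_k\,e_k .
\]
The scalar $\tfrac{q-q^{-1}}{t-t^{-1}}$ is bar-fixed, so applying $\overline{\ \cdot\ }$ and the inductive hypothesis gives $\overline{e_{k+1}}=(\tfrac{q-q^{-1}}{t-t^{-1}})^{k-1}e_kH_{\underline k}H_k^{-1}e_k$, and bar-invariance reduces to the single identity $e_kH_{\underline k}H_k^{-1}e_k=e_kH_{\underline k}^{-1}H_ke_k$. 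Expanding both sides with the skein relation $H_j^{\pm 1}=H_j^{\mp 1}\pm(q-q^{-1})$, the difference is $(q^{-1}-q)\bigl(e_kH_{\underline k}e_k-e_kH_ke_k\bigr)$, which vanishes by Lemma~\ref{lemma:ek}(5). Your ``alternatively'' paragraph is reaching for this reformulation, and you do cite Lemma~\ref{lemma:ek}(5) as the ingredient that should kill the leftover terms, but without the explicit formula above the induction does not actually close; filling that in would complete the argument.
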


\begin{proof}
     We have
     \begin{equation*}
         \begin{aligned}
             e_{k+1}=&eH_{\underline 1,\underline k}^{-}H_{1,k}^{+}e_{k}\\
             =&(\frac{q-q^{-1}}{t-t^{-1}})^{k-1}eH_{\underline 1,\underline k}^{-}H_{1,k}^{+}\cdot e_{k-1}e_{k}\\
             =&(\frac{q-q^{-1}}{t-t^{-1}})^{k-1}eH_{\underline 1,\underline {k-1}}^{-}H_{1,k-1}^{+}e_{k-1}\cdot H_{ \underline k}^{-1}H_ke_{k}\\
             =&(\frac{q-q^{-1}}{t-t^{-1}})^{k-1}e_{k}H_{\underline k}^{-1}H_ke_{k}.
         \end{aligned}
     \end{equation*}
    
    Then we use induction on $k$ to prove the lemma. The case $k=0,1$ follows from the definition of $\overline{\ \cdot\ }$.
    The case $k=2$ follows from the proof of Lemma \ref{lemma:bar}.
    Now suppose $\overline{e_{k}}=e_{k}$ for $k\geqslant 3$,
    we see that
    \begin{align*}
        \overline{e_{k+1}}=&\overline{(\frac{q-q^{-1}}{t-t^{-1}})^{k-1}e_{k}H_{\underline k}^{-1}H_ke_{k}}=(\frac{q-q^{-1}}{t-t^{-1}})^{k-1}e_{k}H_{\underline k}H^{-1}_ke_{k}.
    \end{align*}
    Thus to prove $\overline{e_{k+1}}=e_{k+1}$ it suffices to prove that $e_{k}H_{\underline k}H^{-1}_ke_{k}=e_{k}H_{\underline k}^{-1}H_ke_{k}$.
   We see that 
    \[e_{k}H_{\underline k}H^{-1}_ke_{k}=e_{k}H_{\underline k}e_{k}+(q^{-1}-q)e_{k}H_{\underline k}H_ke_{k}\]
    while
    \[e_{k}H^{-1}_{\underline k}H_ke_{k}=e_{k}H_{k}e_{k}+(q^{-1}-q)e_{k}H_{\underline k}H_ke_{k}.\]
    Since $e_{k}H_{k}e_{k}=e_{k}H_{\underline k}e_{k}$ by Lemma \ref{lemma:ek}, we conclude that $\overline{e_{k+1}}=e_{k+1}$.
\end{proof}

\subsection{Canonical bases}
Following from Theorem~\ref{thm:cbhom} and Proposition~\ref{prop:qWBhom}, we see that the Kazhdan-Lusztig type basis (canonical basis) of the quantized walled Brauer algebra $\qWB{\nl}{\nr}{q}{t}$ is parameterized by $(\ua^\nl\da^\nr,\ua^\nl\da^\nr)$-matchings. Here we give examples of the canonical bases in low-rank cases.

\begin{example}
\label{ex:n2m1}
    Suppose $\nl=2$ and $\nr=1$, then the canonical basis of $\qWB{2}{1}{q}{t}$ is given by 
    \begin{align*}
        &C_{0}=id,\ C_{s_{\underline1}}=H_{\underline1}+q^{-1}, \\
        &C_{e}=e,\  C_{s_{\underline1}e}=H_{\underline1}e+q^{-1}e,\ C_{es_{\underline1}}=eH_{\underline1}+q^{-1}e,\\
    &C_{s_{\underline1}es_{\underline1}}=H_{\underline1}eH_{\underline1}+q^{-1}eH_{\underline1}+q^{-1}H_{\underline1}e+q^{-2}e.
    \end{align*}
\end{example}

\begin{example}
    Suppose $\nl=2$ and $\nr=2$, then the canonical basis of $\qWB{2}{2}{q}{t}$ is given by 
    \begin{align*}
        &C_{0}=id,\ C_{s_{\underline1}}=H_{\underline1}+q^{-1},\ C_{s_{1}}=H_{1}+q^{-1},\\
        &C_{s_1s_{\underline1}}=H_{\underline1}H_{1}+q^{-1}H_{\underline1}+q^{-1}H_1+q^{-2},\\
        &C_{e}=e,\  C_{s_{\underline1}e}=H_{\underline1}e+q^{-1}e,\ C_{s_{1}e}=H_{1}e+q^{-1}e, \\&C_{s_1s_{\underline1}e}=H_{\underline1}H_{1}e+q^{-1}H_{\underline1}e+q^{-1}H_1e+q^{-2}e,\\
        &C_{es_{\underline1}}=eH_{\underline1}+q^{-1}e,\ C_{es_{1}}=eH_{1}+q^{-1}e,\\
        &C_{es_1s_{\underline1}}=eH_{\underline1}H_{1}+q^{-1}eH_{\underline1}+q^{-1}eH_1+q^{-2}e,\\
    &C_{s_{\underline 1}es_{\underline 1}}=H_{\underline1}eH_{\underline1}+q^{-1}eH_{\underline1}+q^{-1}H_{\underline1}e+q^{-2}e,\\  
    &C_{s_{\underline1}es_{1}}=H_{\underline1}eH_{1}+q^{-1}eH_{1}+q^{-1}H_{\underline1}e+q^{-2}e,\\
    &C_{s_{1}es_{\underline1}}=H_{1}eH_{\underline1}+q^{-1}eH_{\underline1}+q^{-1}H_{1}e+q^{-2}e,\\
    &C_{s_{1}es_{1}}=H_{1}eH_{1}+q^{-1}eH_{1}+q^{-1}H_{1}e+q^{-2}e,\\
&C_{s_{\underline1}es_{\underline1}s_{1}}=C_{s_{\underline1}es_{\underline1}}C_{s_{1}}=C_{s_{\underline1}}C_{es_{\underline1}s_1},\\
&C_{s_{1}es_{1}s_{\underline1}}=C_{s_{1}es_{1}}C_{s_{\underline1}}=C_{s_{1}}C_{es_{\underline1}s_1},\\
&C_{s_{\underline1}s_1es_{1}}=C_{s_{\underline1}}C_{s_{1}es_{1}}=C_{s_{\underline1}s_1e}C_{s_{1}},\\
&C_{s_{\underline1}s_1es_{\underline1}}=C_{s_{\underline1}}C_{s_{1}es_{\underline1}}=C_{s_{\underline1}s_1e}C_{s_{\underline1}},\\
&C_{s_{\underline1}s_1es_{1}s_{\underline1}}=C_{s_{1}s_{\underline1}}C_eC_{s_{1}s_{\underline1}},\\
&C_{e_{2}}=e_{2}, \ C_{s_{\underline1}e_{2}}=C_{s_{\underline1}}e_{2},\ C_{e_{2}s_{\underline1}}=e_{2}C_{s_{\underline1}},\\
& C_{s_{\underline1}e_{2}e_{\underline1}}=C_{s_{\underline1}}e_{2}C_{s_{\underline1}}.
    \end{align*}
\end{example}

\begin{example}
\label{ex:n3m1}
    Suppose $\nl=3$ and $\nr=1$, then the canonical basis of $\qWB{3}{1}{q}{t}$ is given by 
    \begin{align*}
        &C_{0}=id,\ C_{s_{\underline1}}=H_{\underline1}+q^{-1},\ C_{s_{\underline2}}=H_{\underline2}+q^{-1}, \\
        &C_{s_{\underline1}s_{\underline 2}}=C_{s_{\underline1}}C_{s_{\underline2}},\ C_{s_{\underline2}s_{\underline 1}}=C_{s_{\underline2}}C_{s_{\underline1}},\\
        &C_{s_{\underline1}s_{\underline2}s_{\underline 1}}=H_{\underline1}H_{\underline2}H_{\underline1}+q^{-1}H_{\underline1}H_{\underline2}+q^{-1}H_{\underline2}H_{\underline1}+q^{-2}H_{\underline1}+q^{-2}H_{\underline2}+q^{-3},\\
        &C_{e}=e,\  C_{s_{\underline1}e}=H_{\underline1}e+q^{-1}e,\ C_{s_{\underline2}e}=H_{\underline2}e+q^{-1}e,\\
        & C_{s_{\underline1}s_{\underline 2}e}=C_{s_{\underline1}s_{\underline 2}}C_e,\ C_{s_{\underline2}s_{\underline 1}e}=C_{s_{\underline2}s_{\underline 1}}C_e,\ C_{s_{\underline 1}s_{\underline2}s_{\underline 1}e}=C_{s_{\underline 1}s_{\underline2}s_{\underline 1}}C_e,\\
        &C_{es_{\underline1}}=eH_{\underline1}+q^{-1}e,\ C_{es_{\underline1}s_{\underline2}}=C_eC_{s_{\underline1}s_{\underline2}},\ C_{es_{\underline 1}s_{\underline2}s_{\underline 1}}=C_eC_{s_{\underline 1}s_{\underline2}s_{\underline 1}},\ C_{es_{\underline2}s_{\underline1}}=C_eC_{s_{\underline2}s_{\underline1}},\\
    &C_{s_{\underline1}es_{\underline1}}=H_{\underline1}eH_{\underline1}+q^{-1}eH_{\underline1}+q^{-1}H_{\underline1}e+q^{-2}e=C_{s_{\underline 1}}C_eC_{s_{\underline 1}},\\
    &C_{s_{\underline1}es_{\underline1}s_{\underline2}}=C_{s_{\underline1}es_{\underline1}}C_{s_{\underline2}},\\
    &C_{s_{\underline1}es_{\underline1}s_{\underline2}s_{\underline1}}=C_{s_{\underline1}}C_eC_{s_{\underline1}s_{\underline2}s_{\underline1}},\\
    &C_{s_{\underline2}s_{\underline1}es_{\underline1}}=C_{s_{\underline2}s_{\underline 1}}C_eC_{s_{\underline 1}},\\
    &C_{s_{\underline2}s_{\underline1}es_{\underline1}s_{\underline2}}=C_{s_{\underline2}s_{\underline1}}C_eC_{s_{\underline1}s_{\underline2}},\\
    &C_{s_{\underline2}s_{\underline1}es_{\underline1}s_{\underline2}s_{\underline1}}=C_{s_{\underline2}s_{\underline1}}C_eC_{s_{\underline1}s_{\underline2}s_{\underline1}},\\
    &C_{s_{\underline1}s_{\underline2}s_{\underline1}es_{\underline1}}=C_{s_{\underline1}s_{\underline2}s_{\underline 1}}C_eC_{s_{\underline 1}},\\
    &C_{s_{\underline1}s_{\underline2}s_{\underline1}es_{\underline1}s_{\underline2}}=C_{s_{\underline1}s_{\underline2}s_{\underline1}}C_eC_{s_{\underline1}s_{\underline2}}.\\
    \end{align*}
\end{example}

\begin{remark}
    We observe that the canonical bases in the above examples have positivity.
\end{remark}

\end{document}